\tikzstyle{vertex}=[draw, circle, scale=.7, minimum size=.8cm]
\numberwithin{equation}{section}
\numberwithin{figure}{section}
\theoremstyle{plain}
\newtheorem{theorem}{\protect\theoremname}
  \newtheorem{cor}[theorem]{\protect\corollaryname}
  \newtheorem{lemma}[theorem]{\protect\lemmaname}
  \theoremstyle{definition}
  \theoremstyle{plain}
  \theoremstyle{remark}
  \numberwithin{theorem}{section}
\newcommand{\OR}[2]{\operatorname{OR}_{#1}({#2})}
\newcommand{\R}[2]{\operatorname{R}_{#1}(#2)}
\newcommand{\tow}[2]{\operatorname{tow}_{#1}(#2)}
  \providecommand{\claimname}{Claim}
  \providecommand{\definitionname}{Definition}
  \providecommand{\lemmaname}{Lemma}
  \providecommand{\remarkname}{Remark}
  \providecommand{\corollaryname}{Corollary}
  \providecommand{\propname}{Proposition}
  \providecommand{\examplename}{Example}
  \providecommand{\obsname}{Observation}
\providecommand{\theoremname}{Theorem}
\begin{document}

\title{Ordered Ramsey Numbers of Loose Paths and Matchings}

\author{Christopher Cox$^1$ \and Derrick Stolee$^1$}

\date{\today}
\maketitle

\begin{abstract}
For a $k$-uniform hypergraph $G$ with vertex set $\{1,\ldots,n\}$, the ordered Ramsey number $\OR{t}{G}$ is the least integer $N$ such that every $t$-coloring of the edges of the complete $k$-uniform graph on vertex set $\{1,\ldots,N\}$ contains a monochromatic copy of $G$ whose vertices follow the prescribed order. 
Due to this added order restriction, the ordered Ramsey numbers can be much larger than the usual graph Ramsey numbers. 
We  determine that the ordered Ramsey numbers of loose paths under a monotone order grows as a tower of height one less than the maximum degree. 
We also extend theorems of Conlon, Fox, Lee, and Sudakov [Ordered Ramsey numbers, arXiv:1410.5292] on the ordered Ramsey numbers of 2-uniform matchings to provide upper bounds on the ordered Ramsey number of $k$-uniform matchings under certain orderings.
\end{abstract}

\spacing{1.0}

\footnotetext[1]{Iowa State University, Ames, IA, USA. \texttt{\{cocox,dstolee\}@iastate.edu}.}

\section{Introduction}

Ramsey theory is a fundamental topic in extremal graph theory.
The Ramsey number  $\R{t}{n}$ is the minimum $N$ such that every $t$-coloring of the edges of the complete graph of order $N$ contains a monochromatic clique of order $n$.
The number $\R{t}{n}$ can also be defined as the maximum $N$ such that there exists a $t$-coloring of $K_{N-1}$ that avoids monochromatic copies of the graph $K_n$.
This concept naturally generalizes to avoiding monochromatic copies of any $k$-uniform hypergraph $G$, defining the graph Ramsey number $\R{t}{G}$, leading to a large number of available questions.
The asymptotic growth of $\R{t}{G}$ varies significantly, and depends on several properties of $G$, such as maximum degree~\cite{CRST} or degeneracy~\cite{FS}.

A recent variation, called \emph{ordered Ramsey theory}, has received significant attention~\cite{BK, CGKVV, CFLS,  FPSS, MSW, MS}.
In this variation, we again look for $t$-colorings of the complete graph that avoid monochromatic copies of a graph $G$, except that the \emph{order} of the vertices of $G$ in this monochromatic copy are very important.
This modification relaxes some of the constraints on the coloring, so the ordered Ramsey numbers can be much larger than the usual graph Ramsey number, but is still bounded from above by the Ramsey number $\R{t}{n}$ where $n$ is the number of vertices in $G$.
If $G$ is a 2-uniform path under the standard ordering, then the 2-color ordered Ramsey number of $G$ is equal to the bound of the Erd\H{o}s-Szekeres Theorem~\cite{ES} (see \cite{CP,MSW}).
If $G$ is a tight 3-uniform path under the standard ordering, then the 2-color ordered Ramsey number of $G$ is equal to the bound of the \emph{happy ending problem} (see \cite{FPSS}).
Due to these connections, much of the previous work has focused on the ordered Ramsey number of tight $k$-uniform paths under the standard ordering~\cite{FPSS,MSW,MS}, but others considered 2-uniform matchings with an arbitrary ordering~\cite{BK,CFLS}.
We extend these investigations by determining strong bounds on the ordered Ramsey number of loose $k$-uniform paths and $k$-uniform matchings, using an arbitrary number of colors. 

An \emph{ordered $k$-uniform hypergraph} is a hypergraph $G$ where the edge set $E(G)$ contains $k$-sets of vertices, and the vertex set $V(G)$ is totally ordered.
An ordered hypergraph $G$ is \emph{contained} in an ordered hypergraph $H$ if there is an injective, order-preserving map from the vertices of $G$ to the vertices of $H$ such that edges of $G$ map to edges of $H$.
Let $K_N^k$ be the complete $k$-uniform hypergraph on the vertex set $\{1,\dots,N\}$ and let $c : E(K_N^k) \to \{1,\dots,t\}$ be a $t$-coloring of the edges in $K_N^k$.
The $i$-colored subgraph of $K_N^k$ is the ordered hypergraph given by the edges in $c^{-1}(i)$.
For ordered $k$-uniform hypergraphs $G_1, \dots, G_t$, the \emph{ordered Ramsey number} $\OR{}{G_1,\dots,G_t}$ is the minimum $N$ such that for every $t$-coloring of $K_N^k$ there is some color $i$ such that the $i$-colored subgraph contains $G_i$.
This number is necessarily defined and finite, since there exists an $n$ such that each $G_i$ is a subgraph of $K_n^k$ and hence $\OR{}{G_1,\dots,G_t} \leq \R{t}{n}$.
If $G_1 = \dots = G_t = G$, then we denote $\OR{}{G_1,\dots,G_t}$ as $\OR{t}{G}$ and refer to this as the \emph{diagonal case}; otherwise it is the \emph{off-diagonal case}.

For positive integers $k, \ell, e$ such that $k > \ell$, the \emph{$(k,\ell)$-path on $e$ edges}, denoted $P_e^{k,\ell}$, is the $k$-uniform ordered hypergraph on $e(k-\ell)+\ell$ vertices and $e$ totally-ordered edges $A_1,A_2,\dots,A_e$ where two consecutive edges $A_i, A_{i+1}$ intersect exactly on the maximum $\ell$ vertices in $A_i$ and the minimum $\ell$ vertices in $A_{i+1}$.
The path $P_e^{k,k-1}$ is called the \emph{tight $k$-uniform path} and otherwise $P_e^{k,\ell}$ is a \emph{loose path}.
For $\ell = 0$, we can extend the definition of $P_e^{k,\ell}$ by requiring that two consecutive edges $A_i, A_{i+1}$ satisfy $\max A_i < \min A_{i+1}$, and hence the edges are disjoint, forming a \emph{matching}.
Note that when $k = 2$ the only possibilities are a tight path or a matching.
We will primarily use the ordering given by this definition, and we will specify the special cases when we will consider a possibly different ordering on $P_e^{k,\ell}$.

Define the \emph{intersection number}, $i(k,\ell)$, to be the maximum degree of a vertex in $P_e^{k,\ell}$ for all $e \geq k$.
Observe that if $\ell > 0$, then $i(k,\ell)$ is the unique integer $m \geq 2$ that satisfies
\[
\frac{m-2}{m-1} < \frac{\ell}{k} \leq \frac{m-1}{m}.
\]

The tight paths $P_e^{k,k-1}$ have been investigated thoroughly.
For 2-uniform tight paths, the ordered Ramsey number $\OR{t}{P_e^{2,1}}$ is determined by Choudum and Ponnusamy~\cite{CP}, and the off-diagonal case of the number $\OR{}{P_{e_1}^{2,1},\dots,P_{e_t}^{2,1}}$ is demonstrated in full generality by Milans, Stolee, and West~\cite{MSW}.
Fox, Pach, Sudakov, and Suk~\cite{FPSS} determined the growth of $\OR{t}{P_e^{3,2}}$ to be doubly-exponential, and Moshkovitz and Shapira~\cite{MS} found that $\OR{t}{P_e^{k,k-1}}$ grows as a tower of height $k-1$.
In fact, Moshkovitz and Shapira determine $\OR{t}{P_e^{k,k-1}}$ exactly in terms of high-dimensional integer partitions.
In Section~\ref{paths}, we use a version of this theorem using partially-ordered sets (posets), due to Milans, Stolee, and West~\cite{MSW}, in order to prove the following relationship between ordered Ramsey numbers of tight and loose paths.

\begin{theorem}\label{thm:main}
For $k > \ell \geq 1$, $i = i(k,\ell)$, and positive integers $e_1,\dots,e_t$,
\[
	\OR{}{P_{e_1}^{k,\ell}, \dots, P_{e_t}^{k,\ell}} = (k-\ell)\OR{}{P_{e_1}^{i,i-1}, \dots, P_{e_t}^{i,i-1}} + \ell - (k-\ell)(i-1).
\]
\end{theorem}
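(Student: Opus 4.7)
The plan is to prove both inequalities directly by setting up a correspondence between $(k,\ell)$-paths and tight $i$-paths via a partition into $s$-sized blocks, where $s := k-\ell$. Throughout, let $N' := \OR{}{P_{e_1}^{i,i-1}, \dots, P_{e_t}^{i,i-1}}$ and $N := sN' + \ell - s(i-1)$. The defining inequality $(i-1)s < k \leq is$ for the intersection number $i = i(k,\ell)$ gives $k-(i-1)s \in \{1,\dots,s\}$; this is the size of a single ``defect'' block in each of the two partitions below.

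For the upper bound $\OR{}{P_{e_1}^{k,\ell},\dots,P_{e_t}^{k,\ell}} \leq N$, I take any $t$-coloring $c$ of $K_N^k$ and partition $[N]$ into blocks $V_1,\dots,V_{N'}$ with $|V_b| = s$ for $b < N'$ and $|V_{N'}| = k-(i-1)s$. For each $i$-subset $B = \{b_1<\dots<b_i\}$ of $[N']$, let $T(B)$ be $V_{b_1} \cup \dots \cup V_{b_{i-1}}$ together with the first $k-(i-1)s$ elements of $V_{b_i}$, so that $|T(B)| = k$. Defining $c'(B) := c(T(B))$ yields a $t$-coloring of $K_{N'}^i$, which by hypothesis contains a monochromatic tight $i$-path $B_1,\dots,B_{e_\alpha}$ in some color $\alpha$. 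A direct computation confirms that $|T(B_j) \cap T(B_{j+p})| = \max(k-ps,\,0)$ and that the top $\ell$ vertices of $T(B_j)$ coincide with the bottom $\ell$ vertices of $T(B_{j+1})$; consequently $T(B_1),\dots,T(B_{e_\alpha})$ is a $(k,\ell)$-path of color $\alpha$.

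For the lower bound $\OR{}{P_{e_1}^{k,\ell},\dots,P_{e_t}^{k,\ell}} \geq N$, I begin with a $t$-coloring $c'$ of $K_{N'-1}^i$ avoiding each monochromatic $P_{e_\alpha}^{i,i-1}$ and construct a $t$-coloring $c$ of $K_{N-1}^k$ with the same property. Put $r := \ell - s(i-2) - 1 \in \{0,1,\dots,s-1\}$ and partition $[N-1]$ into a (possibly empty) prefix $V_0 := \{1,\dots,r\}$ and blocks $V_b := \{r+(b-1)s+1,\dots,r+bs\}$ for $b = 1, \dots, N'-1$. Let $\phi(v)$ be the block index of $v$, and for a $k$-subset $T = \{v_1<\dots<v_k\}$ of $[N-1]$ define
\[
\pi(T) := \bigl\{\phi(v_{k-(i-1)s}),\, \phi(v_{k-(i-2)s}),\, \dots,\, \phi(v_k)\bigr\}.
\]
Since $k-(i-1)s = r+1$ forces $v_{k-(i-1)s} \geq r+1$, one gets $\phi(v_{k-(i-1)s}) \geq 1$; moreover the indices used in $\pi(T)$ increase by $s$ at each step, so each successive $v$ is at least $s$ larger, forcing a block boundary to be crossed and giving $|\pi(T)| = i$ and $\pi(T) \subseteq [N'-1]$. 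Setting $c(T) := c'(\pi(T))$ produces the desired coloring. Were $c$ to contain a monochromatic $(k,\ell)$-path $T_1,\dots,T_{e_\alpha}$ of color $\alpha$ with ordered vertex set $u_1 < \dots < u_{e_\alpha s + \ell}$, substituting $T_j = \{u_{(j-1)s+1},\dots,u_{(j-1)s+k}\}$ would yield $\pi(T_j) = \{w_{j-1},w_j,\dots,w_{j+i-2}\}$ where $w_m := \phi(u_{(k-(i-1)s)+ms})$, with $w_0 < w_1 < \dots$ strict by the same boundary argument; so $\pi(T_1),\dots,\pi(T_{e_\alpha})$ would be a monochromatic tight $i$-path of color $\alpha$ in $c'$, contradicting the choice of $c'$.

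The most delicate part will be the book-keeping that verifies both correspondences $B \mapsto T(B)$ and $T \mapsto \pi(T)$ honestly convert tight and loose path structures into one another, and that the defect blocks cause no edge cases. Both the upper-bound choice $|V_{N'}| = k-(i-1)s$ and the lower-bound inequality $r < s$ (so that the smallest relevant index $v_{r+1}$ always escapes $V_0$) are forced by the defining inequality $(i-1)s < k \leq is$ of the intersection number $i(k,\ell)$.
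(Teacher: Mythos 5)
Your proof is correct and mirrors the paper's direct proof of Theorem~\ref{thm:main}: both sides translate colorings between the complete $k$-uniform and $i$-uniform graphs via a block partition of the vertex set into $(k-\ell)$-sized intervals (plus one defect block), with your map $T(\cdot)$ playing the role of the paper's canonical preimage $\underline{r}^{-1}$ and your projection $\pi(\cdot)$ playing the role of the paper's rational reduction $\underline{r}$. (The paper also offers a second proof, derived from Theorem~\ref{mainthmq} via the poset $Q_i^*$, but your argument coincides with the direct translation of colorings.)
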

Therefore, the asymptotic growth of $\OR{t}{P_e^{k,\ell}}$ is a tower of height $i(k,\ell)-1$.
In particular, when $i(k,\ell) = 2$ we can use the exact theorem for 2-uniform tight paths to exactly determine the ordered Ramsey number.

\begin{cor}\label{i=2}
For $0 < 2\ell \leq k$ and positive integers $e_1,\dots,e_t$,
\[
	\OR{}{P_{e_1}^{k,\ell}, \dots, P_{e_t}^{k,\ell}} = (k-\ell)\prod_{i=1}^t e_i + \ell.
\]
\end{cor}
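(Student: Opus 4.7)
The plan is to apply Theorem~\ref{thm:main} with the intersection number $i(k,\ell) = 2$ and then substitute the known exact formula for ordered Ramsey numbers of $2$-uniform tight paths.

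First I would observe that the hypothesis $0 < 2\ell \leq k$ (with $\ell \geq 1$) forces $i(k,\ell) = 2$: taking $m = 2$ in the defining inequality gives $\frac{m-2}{m-1} = 0 < \frac{\ell}{k} \leq \frac{1}{2} = \frac{m-1}{m}$, and the uniqueness of $m$ identifies the intersection number. Plugging $i = 2$ into Theorem~\ref{thm:main} then yields
\[
\OR{}{P_{e_1}^{k,\ell}, \dots, P_{e_t}^{k,\ell}} = (k-\ell)\OR{}{P_{e_1}^{2,1}, \dots, P_{e_t}^{2,1}} + \ell - (k-\ell).
\]

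Next I would quote the multi-color Erd\H{o}s-Szekeres result (Choudum-Ponnusamy in the diagonal case, Milans-Stolee-West in the off-diagonal case) referenced in the introduction, namely
\[
\OR{}{P_{e_1}^{2,1}, \dots, P_{e_t}^{2,1}} = \prod_{i=1}^t e_i + 1.
\]
Substituting and simplifying,
\[
(k-\ell)\Bigl(\prod_{i=1}^t e_i + 1\Bigr) + \ell - (k-\ell) = (k-\ell)\prod_{i=1}^t e_i + \ell,
\]
which is the claimed identity.

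I expect no real obstacle here: the corollary is an essentially immediate consequence of Theorem~\ref{thm:main} together with a classical exact formula already in the literature. The only bookkeeping is verifying $i(k,\ell) = 2$ under the stated hypothesis and carrying out the short arithmetic.
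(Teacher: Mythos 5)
Your proof is correct and essentially mirrors the paper's: the paper derives the corollary from Theorem~\ref{mainthmq} together with the observation $|Q_2(e_1,\dots,e_t)| = \prod_{i=1}^t e_i$, which by Theorem~\ref{mswthm} is exactly the content of the classical formula $\OR{}{P_{e_1}^{2,1},\dots,P_{e_t}^{2,1}} = \prod_{i=1}^t e_i + 1$ that you quote instead. Your verification that $0 < 2\ell \leq k$ forces $i(k,\ell)=2$ and the final arithmetic are both correct.
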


Conlon, Fox, Lee, and Sudakov~\cite{CFLS} and Balko, Cibulka, Kr\'{a}l, and Kyn\u{c}l~\cite{BK} independently investigated how the ordered Ramsey number $\OR{t}{G}$ differs among orderings of a 2-uniform graph $G$. In particular, they investigated upper bounds of $\OR{t}{M}$ for a 2-uniform matching $M$, and found that these upper bounds are nearly sharp.
In Section~\ref{matching}, we extend the methods in these papers to attain upper bounds on the ordered Ramsey numbers of $k$-uniform matchings under certain ``controlled'' orderings.
We present an upper bound on the $t$-color ordered Ramsey number $\OR{t}{P_e^{2,1}}$ for an arbitrarily-ordered copy of $P_e^{2,1}$ that nearly matches the upper bound on $\OR{t}{M}$ for a 2-uniform matching $M$, extending work of Cibulka, Gao, Kr\v{c}\'al, Valla, and Valtr~\cite{CGKVV} on two colors.
Several conjectures and open problems are presented in Section~\ref{future}.

\subsection{Notation}

We follow standard notation from~\cite{West}.
For an (ordered) hypergraph $G$, we use $V(G)$ as the vertex set of $G$, $E(G)$ as the edge set of $G$, $|G|$ as the number of edges in $G$, and $k$ will always denote the size of an edge in $G$.
For integers $m \leq n$, let $[n] = \{1,\dots,n\}$, $[m,n] = \{m, m+1, \dots, n-1, n\}$, and let ${[n]\choose m}$ denote the set of $m$-element subsets of $[n]$.
For $k \geq 2$, the complete $k$-uniform (ordered) hypergraph with vertex set $[N]$ is denoted $K_N^k$.
The 2-uniform case is special, so $K_N$ denotes $K_N^2$.

We use $\lg n = \log_2 n$. We always use $e$ the number of edges in a graph and never as the base of the natural logarithm.
The \emph{tower function of height $t$}, denoted by $\tow{t}{n}$, is 
\[
	\tow0n = n, \quad\text{and}\quad \tow{t}{n} = 2^{\tow{t-1}{n}}\text{ for $t \geq 1$.}
\]
We use $\subseteq$ to denote any partial order, including the containment order.
We use $\leq$ to denote a total order, including a linear extension of a partial order.
A list $(x_1,\ldots,x_n)$ is \emph{descent-free} if $x_i\not\supseteq x_{i+1}$ for all $i\in [n-1]$. 
Note that any sublist of a linear extension is descent-free.

\section{Ordered Ramsey Numbers of Loose Paths}\label{paths}

To study the ordered Ramsey number of loose paths, we first review the previous results on the ordered Ramsey number of tight paths.
For a poset $P = (P,\subseteq)$, a \emph{down-set} is a set $S \subseteq P$ such that if $y \in S$ and $x \subseteq y$, then $x \in S$.
For a set $A \subseteq P$, let $D(A)$ be the minimal down-set containing $A$; observe that $D$ forms a bijection between antichains and down-sets of $P$.
The poset $J(P)$ consists of all down-sets in $P$, ordered by containment.

Let $m, e_1,\dots,e_t$ be positive integers and $m \geq 1$.
Define the poset $Q_m(e_1,\dots,e_t)$ iteratively as follows: let $Q_1(e_1,\dots,e_t)$ be a disjoint union of $t$ chains of size $e_1-1, \dots, e_t-1$, and $Q_{m+1}(e_1,\dots,e_t) = J(Q_m(e_1,\dots,e_t))$.
The size of $Q_k(e_1,\ldots,e_t)$ is equal to the largest $N$ such that we can $t$-color $K_N^k$ while avoiding ordered copies of $P_{e_1}^{k,k-1}, \dots, P_{e_t}^{k,k-1}$.

\begin{theorem}[Moshkovitz and Shapira~\cite{MS}; Milans, Stolee, and West~\cite{MSW}]\label{mswthm}
Let $k, e_1,\dots,e_t$ be positive integers and $k \geq 2$.
Then,
\[
	\OR{}{P_{e_1}^{k,k-1},\dots,P_{e_t}^{k,k-1}} = |Q_k(e_1,\dots,e_t)| + 1.
\]
\end{theorem}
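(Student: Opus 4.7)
The plan is to prove both bounds in Theorem \ref{mswthm} by induction on $k$, driving the inductive step with the recursive structure $Q_k = J(Q_{k-1})$. For the base case $k = 2$, I would first observe that $Q_1(e_1,\ldots,e_t)$ is a disjoint union of chains of sizes $e_1 - 1,\ldots,e_t - 1$, so its down-sets are specified by choosing a prefix of each chain, giving $|Q_2(e_1,\ldots,e_t)| = \prod_i e_i$. The upper bound then reduces to an Erd\H{o}s--Szekeres-style pigeonhole: for any $t$-coloring $c$ of $K_N$, label each vertex $v$ by $(\ell_1(v),\ldots,\ell_t(v))$, where $\ell_i(v)$ is the length of the longest monochromatic color-$i$ tight path ending at $v$. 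Avoiding $P_{e_i}^{2,1}$ in color $i$ keeps each label in $\prod_i\{0,\ldots,e_i-1\}$, and the monotonicity $c(uv) = i \Rightarrow \ell_i(v) > \ell_i(u)$ forces the labels to be pairwise distinct, yielding $N \le \prod_i e_i$. For a matching construction, label the vertices of $K_{\prod_i e_i}$ bijectively with $\prod_i\{1,\ldots,e_i\}$ in lexicographic order and color each edge $uv$ by the first coordinate at which the labels differ; monochromatic color-$i$ paths then have strictly increasing $i$-th coordinate while earlier coordinates are fixed, bounding their length by $e_i - 1$.

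For the inductive step ($k > 2$), the idea is to encode each vertex as a down-set in $Q_{k-1}$ using the inductive hypothesis applied to induced $(k-1)$-uniform colorings. Given a $t$-coloring $c$ of $K_N^k$ avoiding each $P_{e_i}^{k,k-1}$, the key step is to associate to each $(k-1)$-subset $T \subseteq [N]$ a profile $p(T) \in Q_{k-1}$ that records how tight paths ending on $T$ can be extended backwards in each color. The profiles $\{p(T) : T \in \binom{[v-1]}{k-1}\}$ collected below each vertex $v$ should then generate a down-set $\phi(v) \in J(Q_{k-1}) = Q_k$. A monotonicity argument mirroring the base case would then show that $u < v$ implies $\phi(u) \neq \phi(v)$---otherwise a longest color-$i$ tight path ending near $u$ could be extended through $v$, contradicting the profile---so $N \le |Q_k|$. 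For the lower bound, I would reverse the construction: fix a linear extension of $Q_k$, label the vertices of $K_{|Q_k|}^k$ with the elements of $Q_k$ in that order, and color edges recursively so that each monochromatic color-$i$ tight path projects to a chain of length at most $e_i - 1$ in some embedded copy of $Q_{k-1}$.

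The hard part will be making the profile assignment $T \mapsto p(T) \in Q_{k-1}$ rigorous and verifying that the profiles collected below each vertex $v$ indeed generate a well-defined down-set in $Q_{k-1}$, so that $\phi(v)$ is a bona fide element of $J(Q_{k-1}) = Q_k$. This requires careful bookkeeping of how profiles respect the poset order under extensions of $T$ by an additional vertex, and is precisely what justifies the appearance of $J$ in the recursive definition of $Q_k$. Once this is in place, both the injectivity of $\phi$ (for the upper bound) and the correspondence between monochromatic tight paths and chains in $Q_k$ (for the lower bound) should drop out of the recursive structure of the $Q_m$ tower.
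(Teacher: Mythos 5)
First, note that the paper does not actually prove Theorem~\ref{mswthm}; it is quoted from \cite{MS} and \cite{MSW}. The paper does prove the generalization Theorem~\ref{mainthmq} for loose paths, whose specialization $\ell = k-1$ (so $i(k,\ell)=k$, $k-\ell=1$, $\ell'=1$, and $|Q_i^*|=|Q_k|$) recovers exactly this statement, so that proof is the right point of comparison. Your base case $k=2$ is correct on both sides: labelling by longest monochromatic ending path forces distinct labels in $\prod_i\{0,\dots,e_i-1\}$, and the lexicographic coloring gives the matching construction. These are the standard Erd\H{o}s--Szekeres arguments.

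The inductive step, however, has the index sizes reversed relative to the structure that actually works, and I think this is a genuine gap rather than just omitted detail. You propose assigning a profile $p(T)\in Q_{k-1}$ to each $(k-1)$-subset $T$ and then taking $\phi(v)\in J(Q_{k-1})=Q_k$ to be the down-set generated by $\{p(T): T\in\binom{[v-1]}{k-1}\}$. But the encoding used in \cite{MSW} and in the proof of Theorem~\ref{mainthmq} does not jump from $(k-1)$-sets to singletons in one $J$ step. It builds a full tower of maps $g_m : \binom{[N]}{k-m+1} \to Q_m$ for $m=1,\dots,k$: $g_1(X)\in Q_1$ records the length of the longest monochromatic tight path ending at the $k$-set $X$, and recursively $g_{m+1}(X) = D\bigl(\{g_m(X\cup\{w\}) : w < \min X\}\bigr)$. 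In particular, the profile naturally attached to a $(k-1)$-set lies in $Q_2$, not $Q_{k-1}$, and the down-set $\phi(v)=g_k(\{v\})$ is generated by the values $g_{k-1}(\{w,v\})\in Q_{k-1}$ on \emph{pairs}, not on $(k-1)$-sets. Moreover, making this rigorous requires establishing the anti-monotonicity $g_m(Y^-)\not\supseteq g_m(Y^+)$ for $(k-m+2)$-sets $Y$ at every level $m$ --- that is precisely the content that ``justifies the appearance of $J$'' and the part your sketch defers. The lower-bound sketch has the analogous problem: the construction does not project onto a single embedded $Q_{k-1}$ but repeatedly descends $Q_k\to Q_{k-1}\to\cdots\to Q_1$ via the maps $f_m$ that select a witness in $y\setminus x$ whenever $x\not\supseteq y$, and the coloring of an edge is determined by where the fully descended value lands in $Q_1$. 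Without this multi-level recursion, the single-step induction on $k$ as you set it up does not close.
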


We extend this result to loose paths by referring to the same poset definitions.
In particular, the most important parameter affecting the asymptotic growth of $\OR{t}{P_e^{k,\ell}}$ is $i(k,\ell)$, and the value $k$ contributes only to the leading constant.

\begin{theorem}\label{mainthmq}
If $k > \ell \geq 1$ and $e_1,\ldots,e_t$ are positive integers, then 
\[
\OR{}{P_{e_1}^{k,\ell},\ldots,P_{e_t}^{k,\ell}}=(k-\ell)|Q_{i(k,\ell)}(e_1,\dots,e_t)|+\ell-(k-\ell)(i(k,\ell)-2).
\]
\end{theorem}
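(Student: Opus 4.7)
The plan is to derive Theorem~\ref{mainthmq} directly from the two results already available in the excerpt: Theorem~\ref{thm:main} expresses the loose-path ordered Ramsey number in terms of a tight $i$-uniform one (with $i = i(k,\ell)$), and Theorem~\ref{mswthm} evaluates that tight-uniform number as $|Q_i(e_1,\dots,e_t)| + 1$. Composing these two identities leaves only an algebraic simplification, so there is nothing substantive beyond the work those two theorems already carry.

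Carrying this out, set $i = i(k,\ell)$ and apply Theorem~\ref{thm:main} to obtain
\[
\OR{}{P_{e_1}^{k,\ell},\dots,P_{e_t}^{k,\ell}} = (k-\ell)\OR{}{P_{e_1}^{i,i-1},\dots,P_{e_t}^{i,i-1}} + \ell - (k-\ell)(i-1).
\]
Then use Theorem~\ref{mswthm} to replace the inner Ramsey number by $|Q_i(e_1,\dots,e_t)| + 1$ and collect the constant term:
\[
(k-\ell)\bigl(|Q_i(e_1,\dots,e_t)|+1\bigr) + \ell - (k-\ell)(i-1) = (k-\ell)|Q_i(e_1,\dots,e_t)| + \ell - (k-\ell)(i-2),
\]
which is precisely the right-hand side of Theorem~\ref{mainthmq}.

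Under the alternative reading in which Theorem~\ref{mainthmq} is the theorem actually proved first and Theorem~\ref{thm:main} is its corollary, the natural plan uses a block decomposition of $[N]$ with $N = (k-\ell)|Q_i(e_1,\dots,e_t)| + \ell - (k-\ell)(i-2)$. Partition $[N]$ into $|Q_i(e_1,\dots,e_t)|$ blocks of size $k-\ell$ together with a tail of size $\ell - (k-\ell)(i-2)$, which is positive precisely because $i(k,\ell)$ is the unique integer with $\ell > (i-2)(k-\ell)$. The key alignment fact, equivalent to $(k-\ell)(i-1) < k \leq (k-\ell)i$, is that every window of $k$ consecutive vertices of $[N]$ meets exactly $i$ consecutive blocks. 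One then blows up an extremal coloring of $K_{|Q_i|}^i$ (given by Theorem~\ref{mswthm}) block-by-block to get a loose-path-free coloring of $K_N^k$ for the lower bound, and conversely projects any coloring of $K_{N+1}^k$ to a coloring of $K_{|Q_i|+1}^i$ for the upper bound. The main obstacle on this direct route is verifying that the block-to-block correspondence captures \emph{every} loose path $P_e^{k,\ell}$ in $[N]$---not merely those whose vertices happen to be canonically aligned to block boundaries---so that loose-path-free colorings of $K_N^k$ correspond exactly to tight-path-free colorings of $K_{|Q_i|}^i$.
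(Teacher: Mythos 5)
Your first route---plugging Theorem~\ref{mswthm} into Theorem~\ref{thm:main} and simplifying---has correct algebra, but in the paper's own logical structure it is circular: Theorem~\ref{thm:main} is stated as a corollary of Theorem~\ref{mainthmq} together with Theorem~\ref{mswthm} (``Theorem~\ref{thm:main} follows from Theorems~\ref{mswthm} and \ref{mainthmq}''), so you would be proving Theorem~\ref{mainthmq} from a statement whose primary proof in the paper already uses Theorem~\ref{mainthmq}. The circularity can be broken, because the paper also gives a separate ``Direct Proof of Theorem~\ref{thm:main}'' via the rational reduction $\underline{r}$ and canonical preimage $\underline{r}^{-1}$, which is independent of the poset machinery; if you pair that direct proof with the algebraic identity you wrote, your first route becomes sound. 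But as written you treat Theorem~\ref{thm:main} as a free black box without verifying the independence of its proof, so this is a genuine gap.

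The paper's own proof of Theorem~\ref{mainthmq} is not the derivation you attempt: it constructs an explicit poset $Q_i^*$ (built from $k-\ell$ interleaved copies of $Q_i$ together with a chain of length $\ell'-1$), fixes a linear extension, colors $K_{|Q_i^*|}^k$ using an iterated ``reduction'' $f^{(i-1)}$ down to $Q_1$ for the lower bound, and uses a family of maps $g_m : \binom{[N]}{k-(m-1)(k-\ell)} \to Q_m$ with the descent-free property for the upper bound. This buys something your derivation does not: it exhibits the bijective correspondence between extremal colorings and elements of $Q_i^*$, which is the structural content the authors emphasize as the point of having this theorem at all. Your second sketch (the block decomposition with $|Q_i|$ blocks of size $k-\ell$ plus a tail of size $\ell'$) is essentially the idea behind the paper's direct proof of Theorem~\ref{thm:main}, and the ``obstacle'' you flag---whether every loose path, not only the block-aligned ones, is captured---is exactly the point resolved there by defining $\underline{r}$ via ceilings, $\underline{r}(x_1,\dots,x_k) = \{\lceil x_1/(k-\ell)\rceil, \lceil x_{(k-\ell)+1}/(k-\ell)\rceil, \dots\}$: since any $(k,\ell)$-path spaces successive sampled vertices by at least $k-\ell$, the reduced values are forced to be distinct, and consecutive reduced edges overlap in exactly $i-1$ vertices. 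You correctly diagnose the missing step but do not supply it, so the proposal stops short of a complete argument.
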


\begin{proof}
Note that if $e_i = 1$ for any $i$, then any $t$-coloring avoiding an $i$-colored copy of $P_1^{k,\ell}$ will not use the color $i$; hence $e_i$ can be removed from the list and we can consider $t-1$ coloring.
Also note that $Q_1(e_1,\dots,e_t)$ equals $Q_1(e_1',\dots,e_{t'}')$ where $e_1',\dots,e_{t'}'$ is the list of integers $e_j \geq 2$ for $j \in [t]$.

Let $i = i(k,\ell)$ and $\ell'=\ell-(k-\ell)(i-2)$.
For $m \in [i]$, let $Q_m = Q_m(e_1,\dots,e_t)$.
Let $C_1 \cup \cdots \cup C_t$ be a partition of $Q_1$ into a disjoint union of $t$ chains such that each $C_j$ contains $e_j-1$ elements.

Let $A_1,\ldots,A_{k-\ell}$ be copies of $Q_{i}$ and let $\pi:\bigcup_{j=1}^{k-\ell}A_j\to Q_{i}$ be the natural projection map. 
Also, let $L$ be a chain of size $\ell'-1$. 
Define $Q_{i}^*=A_1\cup\cdots\cup A_{k-\ell}\cup L$ to be a poset with the relation between two distinct elements $x,y \in Q_i^*$ defined as:
\begin{itemize}
\item If $x,y\in L$, keep the same relation as in $L$.
\item If $x\in A_j$ and $y\in L$, let $x<y$.
\item If $x\in A_j$ and $y\in A_{j'}$, where $\pi(x) \neq \pi(y)$, provide $x$ and $y$ with the same relationship as $\pi(x)$ and $\pi(y)$.
\item If $x\in A_j$ and $y\in A_{j'}$, where $\pi(x) = \pi(y)$, let $x\leq y$ if $j\leq j'$.
\end{itemize}

We show that $\OR{}{P_{e_1}^{k,\ell},\ldots,P_{e_t}^{k,\ell}}=|Q_{i}^*|+1$.

\textit{Lower Bound.} Fix a linear extension of $Q_i^*$. 
We consider $\pi$ to be a a projection from $Q_i^*\setminus L\to Q_i$. 
For a list $(x_1,\ldots,x_n)$ in $Q_i^*\setminus L$, we extend $\pi$ so that $\pi(x_1,\ldots,x_n)=(\pi(x_1),\ldots,\pi(x_n))$. 
Further, given a list $(x_1,\ldots,x_n)$ in $Q_i^*$, we define the \emph{reduction} of the list to be $r(x_1,\ldots,x_n)=(x_1,x_{(k-\ell)+1},\ldots,x_{s(k-\ell)+1})$ where $s$ is the largest integer such that $s(k-\ell)+1\leq n$. 

Notice first that $r(x_1,\ldots,x_{s(k-\ell)+\ell})=(x_1,x_{(k-\ell)+1},\ldots,x_{(k-\ell)(s+i-2)+1})$ and that $\ell'=(s(k-\ell)+\ell)-(k-\ell)(s+i-2)$. Hence, if $(x_1,\ldots,x_{s(k-\ell)+\ell})$ is a sublist of the linear extension of $Q_i^*$, then $r(x_1,\ldots,x_{s(k-\ell)+\ell})$ is a descent-free list in $Q_i^*\setminus L$. 

Note that in this linear extension of $Q_i^*$, if $x\in A_j$ and $y\in A_{j+1}$ with $\pi(x)=\pi(y)$, then there is no $z\in Q_i^*$ such that $x<z<y$. 
Therefore, if $(x_1,\ldots,x_{s(k-\ell)+\ell})$ is a descent-free list in $Q_i^*$, then not only is $r(x_1,\ldots,x_{s(k-\ell)+\ell})$ a descent-free list in $Q_i^*\setminus L$, but $\pi(r(x_1,\ldots,x_{s(k-\ell)+\ell}))$ is a descent-free list with no repetition in $Q_i$. 

Now, consider $2 \leq m \leq i$ and let $x,y \in Q_m$ with $x\nsupseteq y$. 
Let $f_m(x,y)$ be some element of the set $y \setminus x$ inside of $Q_{m-1}$. 
Further, we extend $f_m$ so that if $(x_1,\ldots,x_n)$ is a descent-free list in $Q_m$, then $f_m(x_1,\ldots,x_n)=(f_m(x_1,x_2), \ldots, f_m(x_{n-1},x_n))$. 
If $x \nsupseteq y$ and $y\nsupseteq z$, then $f_m(x,y)\in y\setminus x$ and $f_m(y,z)\in z\setminus y$, so $f_m(x,y)\nsupseteq f_m(y,z)$. 
Hence, if $(x_1,\ldots,x_n)$ is a descent-free list in $Q_m$, then $f_m(x_1,\ldots,x_n)$ is a descent-free list of length $n-1$ in $Q_{m-1}$. 
For a decent-free list $(x_1,\dots,x_n)$ in $Q_i$, define $f^{(0)}(x_1,\dots,x_n) = f_i(x_1,\dots,x_n)$ and $f^{(h)}(x_1,\ldots,x_n)=f_{i-h}(f^{(h-1)}(x_1,\ldots,x_n))$. 
Observe that if  $(x_1,\dots,x_n)$ is a descent-free list of length $n$ in $Q_i$, then $f^{(h)}(x_1,\ldots,x_n)$ is a descent-free list of length $n-h$ in $Q_{i-h}$. 

For a descent-free list $(x_1,\ldots,x_k)$ in $Q_{i}^*$, let $(y_1,\ldots,y_{i})$ be defined as
\[
(y_1,\dots,y_i)=(\pi(x_1),\pi(x_{(k-\ell)+1}),\ldots,\pi(x_{(k-\ell)(i-1)+1}))=\pi(r(x_1,\ldots,x_k)).
\]
Observe that $(y_1,\ldots,y_{i})$ is a descent-free list in $Q_{i}$, so $f^{(i-1)}(y_1,\ldots,y_{i})$ is an element in $Q_1$. 

For $N = |Q_i^*|$, define a $t$-coloring $c$ on $E(K_N^k)$ as $c(x_1,\ldots,x_k)=j$ whenever $f^{(i-1)}(y_1,\ldots,y_{i})\in C_j$, for $(y_1,\dots,y_i) = \pi(r(x_1,\dots,x_k))$.
We now demonstrate that the coloring $c$ avoids a $j$-colored $P_{e_j}^{k,\ell}$ for all colors $j \in [t]$.

Suppose that $(x_1,\ldots,x_{s(k-\ell)+\ell})$ is the vertex set of a $j$-colored copy of $P_s^{k,\ell}$ for some $s \geq 1$. 
Let 
\[
(y_1,\ldots,y_{s+i-1})=(\pi(x_1),\ldots,\pi(x_{(k-\ell)(s+i-2)+1}))=\pi(r(x_1,\ldots,x_{s(k-\ell)+\ell})).
\]
Notice that $(x_{(k-\ell)(r-1)+1},\dots,x_{(k-\ell)(r-1)+k})$ is an edge of $P_s^{k,\ell}$ for $r\in\{1,\ldots,s\}$, and \[(y_r,y_{r+1},\ldots,y_{r+i-1}) = \pi(r(x_{(k-\ell)(r-1)+1},\dots,x_{(k-\ell)(r-1)+k})).\]
Thus, $f^{(i-1)}(y_r,y_{r+1},\dots,y_{r+i-1})$ is an element of the chain $C_j$, so $f^{(i-1)}(y_1,\ldots,y_{s+i-1})$ is a descent-free list of length $s$ in $C_j$. 
Because a descent-free list in a chain must be strictly increasing, $s\leq |C_j| = e_j-1$. 
Thus, $c$ avoids $P_{e_j}^{k,\ell}$ in color $j$ for each $j\in [t]$.

\textit{Upper Bound.}
Let $c$ be a $t$-coloring of $E(K_N^k)$ that avoids $P_{e_j}^{k,\ell}$ in color $j$ for all $j\in[t]$. 
We will show that $N \leq |Q_{i}^*|$.

For $Y\subseteq [N]$ with $|Y|=h > k-\ell$, let $Y^+$ denote the $h-(k-\ell)$ largest elements of $Y$ and $Y^-$ denote the $h-(k-\ell)$ smallest elements of $Y$. 
We will begin by iteratively defining a function $g_m:{[N]\choose k-(m-1)(k-\ell)}\to Q_m$ for $m \in [i]$ with the property that for all $Y\in{[N]\choose k-(m-2)(k-\ell)}$, $g_m(Y^-)\nsupseteq g_m(Y^+)$. 

We start with the case $m=1$.
Suppose that $X\in{[N]\choose k}$ with $c(X)=j$. 
Let $h$ be the largest integer such that there is an $j$-colored $P_{h}^{k,\ell}$ that has $X$ as its maximum edge. 
Because $c$ avoids $P_{e_j}^{k,\ell}$ in color $j$, $h\leq e_j-1$. 
Supposing that $x_1\subset\cdots\subset x_{e_j-1}$ are the elements of $C_j$ in $Q_1$, let $g_1(X)=x_h$. 
For $Y\in{[N]\choose 2k-\ell}$, if $c(Y^-)\neq c(Y^+)$, then $g_1(Y^-)$ and $g_1(Y^+)$ are in different chains of $Q_1$, so they are not comparable. 
If $c(Y^-)=c(Y^+)$, then $g_1(Y^+)\supseteq g_1(Y^-)$ because $Y^-$ and $Y^+$ form a $P_2^{k,\ell}$ in color $c(Y^-)=c(Y^+)$. 
Therefore $g_1(Y^-)\nsupseteq g_1(Y^+)$.

Let $1<m\leq i$, and for $X\in{[N]\choose k-(m-1)(k-\ell)}$, define $g_m(X)=D(\{g_{m-1}(Y):Y^+=X\})$. 
Because $Q_m=J(Q_{m-1})$, $g_j(X)\in Q_j$. 
Suppose that $Y\in{[N]\choose k-(m-2)(k-\ell)}$ and note that $g_{m-1}(Y)\in g_m(Y^+)$. 
If also $g_{m-1}(Y)\in g_m(Y^-)$, then there is some $Z\in{[N]\choose k-(m-2)(k-\ell)}$ such that $Z^+=Y^-$ and $g_{m-1}(Y)\subseteq g_{m-1}(Z)$. 
For $W=Y\cup Z$, it holds that $W^-=Z$ and $W^+=Y$, so $g_{m-1}(W^-)\supseteq g_{m-1}(W^+)$; a contradiction. 
Therefore, $g_{m-1}(Y)\in g_m(Y^+)\setminus g_m(Y^-)$, so $g_m(Y^-)\nsupseteq g_m(Y^+)$.

Now that $g_{i}$ is defined, and $g_i$ maps ${[N]\choose \ell'}$ to $Q_{i}$, we construct a function $\phi : \{\ell',\ldots,N\}\to Q_{i}$. 
For $\ell' \leq x\leq n$, let $\phi(x) = g_{i}(\{x-\ell'+1,\ldots,x\})$. 
We claim that for any $R \in Q_{i}$, $|\phi^{-1}(R)|\leq k-\ell$. 
If $\ell'\leq x_1<\cdots<x_{k-\ell+1}\leq n$, then $\phi(x_1)=\cdots=\phi(x_{k-\ell+1})$. 
Let $W=\{x_{k-\ell+1}-\ell'+1,\ldots,x_{k-\ell+1}\}$ and $Y=\{x_1-\ell'+1,\ldots,x_1\}$.
Since $\phi(x_1)=\phi(x_{k-\ell+1})$ by assumption, we have $g_{i}(Y)=g_{i}(W)$. 
In particular, $g_{i}(Y)\supseteq g_{i}(W)$ as elements in $Q_{i}$. 
Realizing that $x_{k-\ell-\ell'+1}<\min W$, let $X = Y \cup \{x_1,\ldots, x_{k-\ell-\ell'+1}\} \cup W$. 
Note that $|X|=\ell'+k-\ell$ and that $X^-=Y$ while $X^+=W$. 
However,  $X\in{[N]\choose \ell'+k-\ell}$ and $g_{i}(X^-)\nsupseteq g_{i}(X^+)$, a contradiction.

Since $|\phi^{-1}(R)|\leq k-\ell$ for all $R\in Q_{i}$, $N-\ell' + 1\leq (k-\ell)|Q_{i}| = |Q_i^*| - (\ell'-1)$, so $N \leq |Q_{i}^*|$. 
\end{proof}

Theorem~\ref{thm:main} follows from Theorems~\ref{mswthm} and \ref{mainthmq}.
Corollary~\ref{i=2} follows from Theorem~\ref{mainthmq} after observing that $|Q_2(e_1,\dots,e_t)|=\prod_{j=1}^te_j$ because we can select a down-set of $Q_1(e_1,\dots,e_t)$ by selecting at most one element from each chain to be a maximal element of the down-set.

For $m\geq 3$, the value of $|Q_m(e_1,\dots,e_t)|$ is not known exactly, but note that $|Q_3(e_1,\dots,e_t)|$ is the number of antichains in $Q_2(e_1,\dots,e_t)$. 
When $e_1 = \cdots = e_t = 2$, the poset $Q_2(e_1,\dots,e_t)$ is the $t$-dimensional boolean lattice, denoted $2^{[t]}$, and counting the number of antichains in $2^{[t]}$ is already a famous and difficult problem known as Dedekind's problem.
Thus, we will use the bounds of Moshkovitz and Shapira on $\OR{t}{P_e^{k,k-1}}$~\cite[Corollary 3]{MS} to find the following corollary.

\begin{cor}
For $e\geq 2$, $k< 2\ell<2k$, and $\ell' = \ell-(k-\ell)(i(k,\ell)-1)$,
\[
(k-\ell)\tow{i(k,\ell)-2}{e^{t-1}/2\sqrt{t}}+\ell' \leq \OR{t}{P_e^{k,\ell}} \leq(k-\ell)\tow{i(k,\ell)-2}{2e^{t-1}}+\ell'.
\]
\end{cor}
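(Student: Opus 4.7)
The plan is to derive this corollary by chaining together Theorem~\ref{mainthmq} with Theorem~\ref{mswthm}, and then invoking the tight-path bounds of Moshkovitz and Shapira. There is essentially no new combinatorial work: the corollary is a substitution exercise, so the proof will amount to bookkeeping of the constants.

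First, I would specialize Theorem~\ref{mainthmq} to $e_1=\cdots=e_t=e$ and write $i=i(k,\ell)$, obtaining
\[
\OR{t}{P_e^{k,\ell}} = (k-\ell)|Q_{i}(e,\dots,e)| + \ell - (k-\ell)(i-2).
\]
Next, Theorem~\ref{mswthm} (again specialized to constant entries) gives $|Q_{i}(e,\dots,e)| = \OR{t}{P_e^{i,i-1}} - 1$. Substituting this in and simplifying,
\[
\OR{t}{P_e^{k,\ell}} = (k-\ell)\OR{t}{P_e^{i,i-1}} + \ell - (k-\ell)(i-1) = (k-\ell)\OR{t}{P_e^{i,i-1}} + \ell',
\]
where $\ell' = \ell-(k-\ell)(i(k,\ell)-1)$ as defined in the corollary statement.

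Before applying the Moshkovitz--Shapira bounds, I would verify that the hypothesis $k < 2\ell < 2k$ places $i(k,\ell)$ in the useful range. Since $\ell/k \in (1/2,1)$, the defining inequality $(i-2)/(i-1) < \ell/k \leq (i-1)/i$ forces $i \geq 3$, so the tower height $i-2$ is at least $1$ and the bounds of \cite[Corollary~3]{MS},
\[
\tow{i-2}{e^{t-1}/2\sqrt{t}} \leq \OR{t}{P_e^{i,i-1}} \leq \tow{i-2}{2e^{t-1}},
\]
apply meaningfully. Multiplying through by the positive constant $k-\ell$ and adding $\ell'$ yields exactly the asserted inequality.

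The only ``hard'' step is really just the arithmetic rearrangement $\ell - (k-\ell)(i-2) - (k-\ell) = \ell'$ and the verification that the hypothesis forces $i \geq 3$; once those are in place, the bounds transfer directly from the tight case of uniformity $i(k,\ell)$ to the loose case of uniformity $k$.
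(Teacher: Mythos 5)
Your proposal is correct and matches the paper's intended derivation: the paper states that this corollary follows from Theorem~\ref{mainthmq} together with the Moshkovitz--Shapira bounds from \cite[Corollary 3]{MS}, and your chain through Theorem~\ref{mswthm} simply re-derives the relation $\OR{t}{P_e^{k,\ell}} = (k-\ell)\OR{t}{P_e^{i,i-1}} + \ell'$, which is Theorem~\ref{thm:main} (also stated in the paper to follow from Theorems~\ref{mswthm} and~\ref{mainthmq}), after which the substitution of the tower bounds is immediate. The arithmetic confirming $\ell - (k-\ell)(i-2) - (k-\ell) = \ell'$ and the observation that $k < 2\ell < 2k$ forces $i(k,\ell) \geq 3$ are both correct.
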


In \cite{GG}, Gerencs\'{e}r and Gy\'{a}rf\'{a}s showed that for $n\geq m\geq 1$,
\[
\R{}{P_n^{2,1},P_m^{2,1}}=n+\left\lfloor{m\over 2}\right\rfloor+2.
\]
Comparatively, $\OR{}{P_n^{2,1},P_m^{2,1}}=nm+1$, which shows a large discrepancy between the ordered and unordered variants of the Ramsey number in just the $2$-uniform case. 
It should, however, be noted that over all orderings of a $(k,\ell)$-path, the standard ordering on $P_e^{k,\ell}$ does not necessarily minimize the ordered Ramsey number. 
For example, it is easy to observe that there exists an ordering of $P_2^{k,k-1}$ such that $\OR{t}{P_2^{k,k-1}}\leq k+t-1$.

The proof of Theorem~\ref{thm:main} using Theorem~\ref{mainthmq} is valuable because it shows a direct connection between the poset $Q_i(e_1,\dots,e_t)$ and the ordered Ramsey number $\OR{}{P_{e_1}^{k,\ell},\dots,P_{e_t}^{k,\ell}}$ and the best asymptotic bounds on the ordered Ramsey numbers come from this poset perspective.
However, Theorem~\ref{thm:main} can be proven directly by translating $t$-colorings that avoid $(k,\ell)$-paths with $t$-colorings that avoid tight $i$-uniform paths.

\begin{proof}[Direct Proof of Theorem~\ref{thm:main}]
Recall from the proof of Theorem~\ref{mainthmq} the definitions of $i$ and $\ell'$. Let $N = \OR{}{P_{e_1}^{i,i-1}, \dots, P_{e_t}^{i,i-1}}$ and $N' = (k-\ell)N + \ell'$.

For a $k$-uniform edge $\{ x_1, \dots, x_k\}$, we define the \emph{rational reduction}, denoted $\underline{r}(x_1,\dots,x_k)$, to be the the $i$-uniform edge $\{ \lceil x_1/(k-\ell)\rceil,  \lceil x_{(k-\ell)+1}/(k-\ell)\rceil, \dots, \lceil x_{(i-1)(k-\ell)+1}/(k-\ell)\rceil\}$.
For an $i$-uniform edge $\{x_1,\dots, x_i\}$, the \emph{canonical preimage}, denoted $\underline{r}^{-1}(x_1,\dots,x_i)$, is defined as
\[
	\underline{r}^{-1}(x_1,\dots,x_i) = \left[\bigcup_{j=1}^{i-1} \bigcup_{a=1}^{k-\ell} \{ (k-\ell)(x_j-1) + a\}\right] \cup \left[\bigcup_{a=1}^{\ell'} \{ (k-\ell)(x_i-1) + a\}\right].
\]
Observe that  $(i-1)(k-\ell) + \ell' = k$ and hence $\underline{r}^{-1}(x_1,\dots,x_i)$ has $k$ ordered elements.
Finally, note that $\underline{r}$ sends $k$-uniform edges from $K_{N'}^k$ to $i$-uniform edges in $K_N^i$ and $\underline{r}^{-1}$ sends $i$-uniform edges from $K_N^i$ to $k$-uniform edges in $K_{N'}^k$.

Let $N = \OR{}{P_{e_1}^{i,i-1}, \dots, P_{e_t}^{i,i-1}}$ and $N' = (k-\ell)N + \ell'$.

\emph{Lower Bound.}
There exists a $t$-coloring $c : E(K_{N-1}^i) \to [t]$ of $K_{N-1}^i$ that avoids a $j$-colored copy of $P_{e_j}^{i,i-1}$ for each $j \in [t]$.
Define a coloring $c' : E(K_{N'-1}^k) \to [t]$ by $c'(x_1, \dots, x_k) = c(\underline{r}(x_1,\dots,x_k))$.
Suppose that there is a color $j$ and a list $x_1 < \cdots < x_{m}$ of vertices such that there is a $j$-colored copy of $P_{e_j}^{k,\ell}$ in $c'$ on the vertices $x_1,\dots,x_m$.
Then, for each $k$-uniform edge $\{x_p,\dots,x_{p+k-1}\}$ in this copy of $P_{e_j}^{k,\ell}$, the edge $\underline{r}(x_p,\dots,x_{p+k-1})$ has color $j$ in $c$.
Also, for two consecutive edges $\{x_p,\dots,x_{p+k-1}\}$ and $\{x_{p+\ell}, \dots, x_{p+k+\ell-1}\}$ the rational reductions $\underline{r}(x_p,\dots,x_{p+k-1})$ and $\underline{r}(x_{p+\ell}, \dots, x_{p+k+\ell-1})$ intersect in $i-1$ vertices.
Thus, the $e_j$ edges given by the rational reductions form a $j$-colored copy of $P_{e_j}^{i,i-1}$, a contradiction.
Therefore, $c'$ avoids a $j$-colored copy of $P_{e_j}^{k,\ell}$ and hence $\OR{}{P_{e_1}^{k,\ell},\dots,P_{e_t}^{k,\ell}} \geq N'$.

\emph{Upper Bound\footnote{The authors thank Josef Cibulka for providing the translation of colorings in this direction.}.} 
Let $c' : E(K_{N'}^k) \to [t]$ be a $t$-coloring of $K_{N'}^k$.
Define a $t$-coloring $c : E(K_N^i) \to [t]$ of $K_N^i$ as $c(\{x_1, x_2, \dots, x_i\}) = c'( \underline{r}^{-1}(x_1, \dots, x_i))$.
By the definition of $N$, there exists a $j$-colored copy of $P_{e_j}^{i,i-1}$ on vertices $x_1,\dots,x_{m}$ for some $j \in [t]$.
For each $i$-uniform edge $\{x_q, \dots, x_{q+i-1}\}$ in this copy of $P_{e_j}^{i,i-1}$, the $k$-uniform edge $\underline{r}^{-1}(x_q,\dots,x_{q+i-1})$ also has the color $j$ with respect to $c'$.
Further, for two consecutive $i$-uniform edges $\{x_q, \dots, x_{q+i-1}\}$ and $\{x_{q+1},\dots,x_{q+i}\}$ in this copy of $P_{e_j}^{i,i-1}$, the $k$-uniform edges $\underline{r}^{-1}(x_q,\dots,x_{q+i-1})$ and  $\underline{r}^{-1}(x_{q+1},\dots,x_{q+i})$ intersect in exactly $\ell$ vertices.
Therefore, there is a $j$-colored copy of $P_{e_j}^{k,\ell}$ with respect to the coloring $c'$ and therefore $\OR{}{P_{e_1}^{k,\ell},\dots, P_{e_t}^{k,\ell}} \leq N'$.
\end{proof}

Now that we have determined the ordered Ramsey number for a particularly ``nice'' ordering of a $(k,\ell)$-path, it is natural to ask for general bounds on $\OR{t}{P_e^{k,\ell}}$ where the vertices of $P_e^{k,\ell}$ are ordered arbitrarily. In order to simplify that statement of the next lemma and theorem, we deviate slightly from our standard notation and use $P_p$ instead of $P_{p-1}^{2,1}$ to denote the $2$-uniform path on $p$ vertices.
The case for $t = 2$ was originally proven by Cibulka, Gao, Kr\v{c}\'al, Valla, and Valtr~\cite[Theorem 6]{CGKVV}; we include the full proof for the sake of completeness.

\begin{lemma}\label{lma:completepath}
Let $n$ and $p$ be positive integers, and let $P_{2^p}$ be any ordering of the $2$-uniform ordered path on $2^p$ vertices.
Then
\[
\OR{}{K_{2^n},\overbrace{P_{2^p},\ldots,P_{2^p}}^{t-1}}\leq 2^{{1\over p}\left((p+1)^{t-1}(np-1)+1\right)}.
\]
\end{lemma}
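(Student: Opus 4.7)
The plan is to prove the bound by induction on $t$, using the two-color case (\cite[Theorem 6]{CGKVV}, as flagged in the paper) as the base and a color-merging trick for the inductive step. Set
\[
n_t := \tfrac{1}{p}\bigl((p+1)^{t-1}(np-1)+1\bigr),
\]
so that the claimed bound is $2^{n_t}$. A preliminary but crucial observation is that $n_t$ is always a positive integer: reducing modulo $p$ one has $(p+1)^{t-1}(np-1)\equiv 1\cdot(-1)\equiv -1\pmod{p}$, so the numerator is divisible by $p$. Factoring out one copy of $p+1$ from $(p+1)^{t-1}$ yields the recurrence $n_t=(p+1)n_{t-1}-1$ with $n_1=n$. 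For $t=2$ this specializes to $n_2=(p+1)n-1$, which is exactly the base case I would invoke from CGKVV: $\OR{}{K_{2^n},P_{2^p}}\le 2^{(p+1)n-1}=2^{n_2}$, valid for any ordering of $P_{2^p}$.

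For the inductive step, I would assume the bound holds with $t-1$ colors and let $c$ be an arbitrary $t$-coloring of $K_N^2$ with $N=2^{n_t}$. Collapse the first $t-1$ colors to obtain a 2-coloring $c'$: color $A$ is the union of colors $1,\dots,t-1$ and color $B$ is color $t$. Since
\[
N=2^{n_t}=2^{(p+1)n_{t-1}-1}\ \ge\ \OR{}{K_{2^{n_{t-1}}},P_{2^p}}
\]
by the base case applied with parameter $n_{t-1}$, the coloring $c'$ must contain either a monochromatic copy of $P_{2^p}$ in color $B$ (in which case we are finished, as $B$ is just color $t$) or an $A$-monochromatic clique of order $2^{n_{t-1}}$. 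In the latter case, restricting $c$ back down to this clique produces a $(t-1)$-coloring of $K_{2^{n_{t-1}}}$ using only the original colors $1,\dots,t-1$, and by the inductive hypothesis this restricted coloring contains either a $K_{2^n}$ in color $1$ or a $P_{2^p}$ in some color $j\in\{2,\dots,t-1\}$.

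The main obstacle is not in this inductive step, which is essentially a two-line reduction, but in securing the base case: proving the sharp bound $\OR{}{K_{2^n},P_{2^p}}\le 2^{(p+1)n-1}$ for every ordering of the path, which requires the careful labeling/encoding argument of CGKVV (and which the paper reproduces for completeness). What makes the induction land exactly on the claimed exponent, rather than merely within a constant factor of it, is the integrality of $n_t$: the modular computation above ensures there is no rounding loss when invoking the base case on $K_{2^{n_{t-1}}}$, and the recurrence $n_t=(p+1)n_{t-1}-1$ matches precisely the exponent produced on the right-hand side.
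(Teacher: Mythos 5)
Your proposal is correct and follows essentially the same structure as the paper's proof: induction on $t$ with the exponent recurrence $n_t = (p+1)n_{t-1} - 1$, applying the two-color bound to find either a path in color $t$ or a large clique avoiding color $t$, then invoking the inductive hypothesis on that clique. The only difference is that you cite CGKVV for the $t=2$ base case while the paper reproduces that induction-on-$n$ argument in full for completeness; your explicit integrality check for $n_t$ is a nice supplement that the paper leaves implicit.
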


\begin{proof}
We prove by first showing that the theorem holds for all $n$ when $t=2$, and then continue by induction on $t$. For $n=1$ and $t=2$, we see that $\OR{}{K_2,P_{2^p}}=2^p=2^{{1\over p}\left((p+1)(p-1)+1\right)}$.

Let $V(P_{2^p})=\{v_1,\ldots,v_{2^p}\}$ with indices $i_1,\dots,i_{2^p}$ defined such that the ordering on $V(P_{2^p})$ is $v_{i_1}<\cdots<v_{i_{2^p}}$.

Consider a $2$-coloring $c$ of $E(K_N)$ where $N=2^{(p+1)n-1}=2^{p}M$ with $M=2^{(p+1)(p-1)}$. Let $V_1,\ldots,V_{2^p}$ be intervals partitioning $[N]$ with $|V_i|=M$ and $\max V_i<\min V_{i+1}$.
As per the ordering of $V(P_{2^p})$, let $U_j=V_{i_j}$. 
Thus, any path $(u_1,\ldots,u_{2^p})$ with $u_j\in U_j$ is a copy of $P_{2^p}$. 

For $j\in[2^p]$ define $A_j$ to be the set of vertices $v$ in $U_j$ such that there exist $u_k \in U_k$ for $k \in [j-1]$ such that $c(u_1,u_2)=c(u_2,u_3)=\cdots=c(u_{j-1},v)=2$.
Notice that $A_1=U_1$ and $A_{2^p}=\emptyset$ by the assumption that $c$ avoids $P_{2^p}$ in color $2$. 
Let $I$ be the largest integer such that $|A_I|\geq M/2$; thus, let $A=A_I$ and $B=U_{I+1}\setminus A_{I+1}$. 
Note that $|B|\geq M/2$ and the bipartite graph induced by $(A,B)$ has no edges of color $2$. 

Observe that $M/2=2^{(e+1)(n-1)-1}\geq\OR{}{K_{2^{n-1}},P_{2^p}}$ by the induction hypothesis on $n$. 
Therefore, $A$ or $B$ has a $P_{2^p}$ in color $2$ or both have a copy of $K_{2^{n-1}}$ in color $1$. 
If the former is true, we are done, so suppose the latter holds. Therefore, $A\cup B$ has a $K_{2^n}$ in color $1$, so $\OR{}{K_{2^n},P_{2^p}}\leq 2^{(p+1)n-1}$.

Now, suppose that $t>2$ and consider a $t$-coloring, $c$, of $E(K_N)$ for $N=2^{{1\over p}\left((p+1)^{t-1}(np-1)+1\right)}$. Realizing that ${(p+1)^{t-1}(np-1)+1\over p}=(p+1){(p+1)^{t-2}(np-1)+1\over p}-1$, we find through the $t=2$ case that
\[
N\geq\OR{}{K_{2^{{1\over p}\left((p+1)^{t-2}(np-1)+1\right)}},P_{2^p}}.
\]
Thus, $c$ either has a $P_{2^p}$ in color $t$ or a $K_{2^{{1\over p}\left((p+1)^{t-2}(np-1)+1\right)}}$ which is void of color $t$. If the former holds, then we are done, so suppose the latter holds. 
By the induction hypothesis on $t$, 
\[
2^{{1\over p}\left((p+1)^{t-2}(np-1)+1\right)}\geq\OR{}{K_{2^n},\overbrace{P_{2^p},\ldots,P_{2^p}}^{t-2}};
\]
therefore, we either have a $K_{2^n}$ in color $1$ or a $P_{2^p}$ in some color $j \in \{2,\ldots,t-1\}$.
\end{proof}

Lemma~\ref{lma:completepath} immediately implies the following theorem.

\begin{theorem}\label{arbpath}
Let $P_p$ be any ordered $2$-uniform path on $p$ vertices, then
\[
\OR{t}{P_p}\leq 2^{{1\over \lceil\lg p\rceil}\left((\lceil\lg p\rceil+1)^{t-1}(\lceil\lg p\rceil^2-1)+1\right)}=2^{O\left(\lg^t p\right)}.
\]
\end{theorem}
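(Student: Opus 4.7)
The plan is to reduce Theorem~\ref{arbpath} to Lemma~\ref{lma:completepath} by a short padding argument. Set $p' = \lceil \lg p\rceil$, so $p \le 2^{p'}$. Given the arbitrary ordering on $P_p$, I would extend it to an ordered $2$-uniform path $P_{2^{p'}}$ on $2^{p'}$ vertices by appending $2^{p'}-p$ new vertices to one end of the path structure and placing all of them above the original $p$ vertices in the linear order (any relative order among the new vertices works). The identity embedding on the original vertices is then both order-preserving and edge-preserving, so $P_p$ sits inside this $P_{2^{p'}}$ as an ordered sub-hypergraph, giving $\OR{t}{P_p} \le \OR{t}{P_{2^{p'}}}$.

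Next, since a monochromatic ordered $K_{2^{p'}}$ trivially contains any ordered $2$-uniform path on $2^{p'}$ vertices, relaxing one color class to a clique yields
\[
\OR{t}{P_{2^{p'}}} \;\le\; \OR{}{K_{2^{p'}},\overbrace{P_{2^{p'}},\ldots,P_{2^{p'}}}^{t-1}}.
\]
Applying Lemma~\ref{lma:completepath} with $n = p'$ and with the lemma's $p$ also equal to $p'$ then gives
\[
\OR{t}{P_p} \;\le\; 2^{\frac{1}{p'}\left((p'+1)^{t-1}((p')^2-1)+1\right)}.
\]
Substituting $p' = \lceil \lg p\rceil$ produces the stated explicit bound; the asymptotic form $2^{O(\lg^t p)}$ follows because the exponent rewrites as $\frac{(p'-1)(p'+1)^t}{p'} + \frac{1}{p'}$, which is $O((p')^t) = O(\lg^t p)$.

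There is no real obstacle: the entire argument is short bookkeeping on top of Lemma~\ref{lma:completepath}, which is why the paper calls the implication ``immediate.'' The only point that warrants care is the padding step, where one must verify that an arbitrary ordering of $P_p$ can be extended to an ordering of a longer path in a way that preserves the ordered sub-hypergraph relation; placing the new vertices at the top of the linear order makes this automatic, since the inclusion on old vertices stays order-preserving and all old path-edges remain path-edges.
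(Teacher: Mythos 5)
Your proposal is correct and carries out exactly the ``immediate implication'' the paper alludes to: pad the arbitrarily ordered $P_p$ to an ordered path on $2^{\lceil\lg p\rceil}$ vertices (a valid move since $\OR{t}{\cdot}$ is monotone under the ordered-subgraph relation), relax the first color class to the clique $K_{2^{\lceil\lg p\rceil}}$, and invoke Lemma~\ref{lma:completepath} with $n=p=\lceil\lg p\rceil$. The bookkeeping on the exponent and the asymptotic form are also right.
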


As a means to a lower bound on this value, Conlon, Fox, Lee and Sudakov~\cite{CFLS} provided the following lower bound on the ordered Ramsey number of a randomly-ordered 2-uniform matching, which was also proved in a weaker form by Balko, Cibulka, Kr\'{a}l and Kyn\u{c}l \cite{BK}.

\begin{theorem}[Conlon, Fox, Lee and Sudakov \protect{\cite[Theorems 2.3]{CFLS}}]\label{conlonthm}
There exists a positive constant $c$, such that if $M$ is a randomly-ordered matching on $e$ edges, then asymptotically almost surely,
\[
\OR{2}{M}\geq (2e)^{c\log(2e)/\log\log(2e)}.
\]
\end{theorem}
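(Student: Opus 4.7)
The plan is to exhibit, with probability $1-o(1)$ over the uniformly random perfect matching $M$ on $[2e]$, a 2-coloring of $K_{N}$ avoiding a monochromatic ordered copy of $M$, where $N = (2e)^{c\log(2e)/\log\log(2e)} - 1$ for some absolute constant $c > 0$. A plain uniform random coloring of $K_N$ is far too wasteful: the expected number of monochromatic ordered embeddings is $\binom{N}{2e}\cdot 2^{1-e}$, which forces $N = O(e)$ if this expectation is to be $o(1)$. So the construction must be structured and tuned to the randomness of $M$.

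The structured coloring I would use is hierarchical. Choose parameters $b = \Theta(\log(2e))$ and $d = \Theta(\log(2e)/\log\log(2e))$ with $b^d \approx N$, identify $[N]$ with the leaves of a complete $b$-ary tree $T$ of depth $d$ in lexicographic order, and to each internal node $w$ of $T$ assign an independent uniform color $\chi(w)\in\{1,2\}$. An edge $\{u,v\}$ of $K_N$ is colored $\chi(\mathrm{lca}(u,v))$. For any order-preserving embedding $\phi$ of $M$ into $[N]$, let $k(\phi)$ be the number of distinct LCAs realized by the $e$ image-edges; then under the random $\chi$, the probability that $\phi$ is monochromatic equals $2\cdot 2^{-k(\phi)}$, because distinct internal nodes of $T$ receive independent colors.

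To finish the bound I would classify embeddings by their \emph{tree type} $\tau$, namely the isomorphism class of the subtree of $T$ spanned by $\phi$'s LCAs together with the assignment of matching edges to these LCAs. A type $\tau$ with $k$ LCAs admits at most $N^{O(k)}$ embeddings, accounting for the freedom in choosing leaves within each subtree cluster. Against this, the key estimate uses the randomness of $M$: if $\tau$ forces $a_1,\dots,a_k$ pairs of matching-vertices into separate clusters (with $\sum_j a_j = e$), the probability that a uniform perfect matching on $[2e]$ realizes this clustering is at most $\prod_j (2a_j-1)!!/(2e-1)!!$, which is exponentially small unless $k$ is a large fraction of $e$. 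Combining the per-type embedding count, the monochromatic probability $2^{1-k}$, and the matching-compatibility probability, and then summing over all types with $1 \le k \le e$, the total expected number of monochromatic ordered copies becomes $o(1)$ for an appropriate constant $c$ in the definition of $N$.

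The main obstacle, and the step that pins down the exponent $\log(2e)/\log\log(2e)$, is controlling this sum over types: one must balance the blow-up in the number of embeddings as the tree support grows (a factor of $N^{O(k)}$) against the monochromatic-probability gain ($2^{-k}$) and the matching-compatibility gain extracted from the random $M$. The bound becomes sharp precisely because the branching $b$ and depth $d$ of $T$ are chosen so that the dominant contribution comes from $k = \Theta(\log(2e)/\log\log(2e))$, and the existence of a valid coloring for all but an $o(1)$-fraction of $M$ then follows from Markov's inequality applied to the joint expectation over $\chi$ and $M$.
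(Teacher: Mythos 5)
Your hierarchical LCA-coloring is a natural construction to try, but the expectation argument you build on top of it cannot close. The central problem is that for \emph{every} order-preserving embedding $\phi$ of a matching with $e$ edges, the number of distinct LCAs satisfies $k(\phi)\le e$, so the monochromatic probability under your random node-coloring is always at least $2^{1-e}$ — exactly the same as for a fully independent uniform coloring. Consequently
\[
\E_{M,\chi}\bigl[\#\text{monochromatic ordered copies}\bigr]\;=\;\sum_{S\in\binom{[N]}{2e}}\E_M\bigl[2^{1-k(S,M)}\bigr]\;\ge\;\binom{N}{2e}\,2^{1-e},
\]
which, as you yourself observed at the outset, forces $N=O(e)$ and is certainly not $o(1)$ for $N$ quasi-polynomial in $e$. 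No choice of $b,d$, and no matching-compatibility factor, can rescue this: the factor $\prod_j(2a_j-1)!!/(2e-1)!!$ equals $1$ when $k=e$ (all $a_j=1$, which is a vacuous constraint on $M$), so the $k=e$ term alone already contributes $\binom{N}{2e}2^{1-e}$. Markov's inequality on the count of monochromatic copies therefore cannot give the result, and the hierarchical structure buys you nothing at the level of first moments.

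The value of a tree-based coloring lies entirely in \emph{correlations}: all embeddings sharing the same set $W$ of LCA nodes become monochromatic together, so the right object to union-bound over is the witness set $W\subseteq V(T)$, not the set of embeddings. That is also where your per-type count $N^{O(k)}$ breaks down: a ``type'' with a single LCA (say, the root) admits on the order of $\binom{N}{2e}=N^{\Theta(e)}$ embeddings, since the $2e$ leaves are nearly unconstrained, so the bound $N^{O(k)}$ is simply false. A correct version of the argument would (i) union-bound $\Pr_\chi[\exists\text{mono copy}]$ over realizable witness sets $W$, each contributing $2^{1-|W|}$, and (ii) use the randomness of $M$ to show that the expected number of realizable $W$ of size $k$ is dominated by $2^k$; the delicate part is precisely step (ii), where one must understand when a given tuple of tree nodes can serve as the LCA-multiset of a random matching. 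Separately, note that your stated parameters are internally inconsistent: with $b=\Theta(\log n)$ and $d=\Theta(\log n/\log\log n)$ one gets $b^d=n^{\Theta(1)}$, which is only polynomial, not the required $N=n^{\Theta(\log n/\log\log n)}$.
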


Since $P_p$ contains a matching of size $\lfloor p/2\rfloor$, we see that almost every ordering of $P_p$ yields $\OR{2}{P_p}\geq 2^{\Omega(\lg^2 p/\lg\lg p)}$. 
Hence, Theorem \ref{arbpath} is fairly tight when $t=2$.
Therefore, for almost every ordering of $P_p$, $\OR{t}{P_p}$ grows as a quasi-polynomial in $p$ for a fixed $t$ and possibly double-exponentially in $t$ for a fixed $p$. 
Comparatively, for the standard ordering of $P_p$, $\OR{t}{P_p}$ grows polynomially in $p$ and exponentially in $t$.

\section{Ordered Ramsey Numbers of $k$-Uniform Matchings}\label{matching}

Recall that the ordered path $P_e^{k,0}$ has disjoint edges, and therefore is a matching.
The proof of Theorem~\ref{mainthmq} holds for $\ell = 0$, but instead we will consider a more general class of ordered matchings.

For a fixed $0\leq r\leq k$ and positive integer $e$, the \emph{$(k,r)$-nested matching on $e$ edges} is the ordered graph $M_{e}^{k,r}$ defined iteratively as: $E(M_1^{k,r})$ consists of one edge $A_1 = [k]$, and $E(M_{e+1}^{k,r})$ consists of the edges in $E(M_{e}^{k,r})$ and an edge $A_{e+1}$ consisting of the $r$ least integers greater than $\max V(M_{e}^{k,r})$ and  the $k-r$ greatest integers less than $\min V(M_e^{k,r})$.
We say $(k,r)$ is the \emph{nesting pattern} of $M_e^{k,r}$.
Note that $M_{e}^{k,r}$ is isomorphic to $M_{e}^{k,k-r}$ when the ordering is reversed, and $M_{e}^{k,0}\cong M_e^{k,k}\cong P_e^{k,0}$.


In \cite{CL}, Cockayne and Lorimer show that for integers $e_1\geq \cdots\geq e_t$, if $M_i$ is a $2$-uniform matching on $e_i$ edges, then 
\[
\R{}{M_1,\ldots,M_t}=e_{1}+1+\sum_{i=1}^t(e_i-1).
\]
This value is not far from the value of the ordered Ramsey number for 2-uniform nested matchings.
The following lemma presents a lower bound on the ordered Ramsey number of $t$ $k$-uniform nested matchings, even if the nesting patterns differ among the matchings.

\begin{lemma}\label{lma:matchlower}
For positive integers $e_1,\ldots,e_t$ and $r_1,\ldots,r_t\in\{0,\ldots,k\}$,
\[
\OR{}{M_{e_1}^{k,r_1},\ldots,M_{e_t}^{k,r_t}}\geq k\left(1+\sum_{i=1}^t (e_i-1)\right).
\]
\end{lemma}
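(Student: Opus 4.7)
The plan is to prove this lower bound constructively by induction on $E:=\sum_{i=1}^t(e_i-1)$, building for each choice of $(r_1,\ldots,r_t)$ a $t$-coloring of $K_{k(1+E)-1}^k$ that avoids $M_{e_i}^{k,r_i}$ in color $i$ for all $i\in[t]$. For the base case $E=0$ every $e_i=1$, so the target is a coloring of $K_{k-1}^k$; this hypergraph has no edges at all, so any coloring vacuously avoids each single-edge matching $M_1^{k,r_i}$.

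For the inductive step I would fix some $i^*$ with $e_{i^*}\geq 2$ and use the inductive hypothesis to get a coloring $c'$ of $K_{kE-1}^k$ that avoids $M_{e_{i^*}-1}^{k,r_{i^*}}$ in color $i^*$ and $M_{e_i}^{k,r_i}$ in color $i$ for every $i\neq i^*$. The extension adds $k$ new vertices to the ordering, split as the $k-r_{i^*}$ smallest labels (far left of the shifted old vertex set) and the $r_{i^*}$ largest labels (far right); write $V_{\text{new}}$ for the resulting set of $k$ new vertices. Every $k$-edge lying entirely in the old vertex set keeps its $c'$-color, and every $k$-edge that meets $V_{\text{new}}$ is assigned color $i^*$. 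The split $(k-r_{i^*},\,r_{i^*})$ is chosen precisely so that $V_{\text{new}}$ occupies the position pattern of the outermost edge $A_{e_{i^*}}$ of $M_{e_{i^*}}^{k,r_{i^*}}$.

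The heart of the argument, and the step I expect to be the main obstacle, is a combinatorial claim: in any embedded copy of $M_{e_{i^*}}^{k,r_{i^*}}$ inside the enlarged vertex set, only the outermost edge $A_{e_{i^*}}$ can meet $V_{\text{new}}$. I plan to prove this by a position-counting argument. The embedded vertex set $V$ has size $ke_{i^*}$ and sits inside $[k(1+E)-1]$, so its $m$-th smallest element has label between $m$ and $k(1+E)-1-(ke_{i^*}-m)$, while the left portion of $A_j$ occupies positions $(e_{i^*}-j)(k-r_{i^*})+1,\ldots,(e_{i^*}-j+1)(k-r_{i^*})$ in the sorted order of $V$ (and its right portion has an analogous description). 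A direct inequality will show that for every $j<e_{i^*}$ both portions land strictly between labels $k-r_{i^*}$ and $k(1+E)-r_{i^*}$, i.e.\ inside the old vertex set. The boundary cases $r_{i^*}\in\{0,k\}$ (where one half of $V_{\text{new}}$ is empty) require a vacuous check on the missing side.

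Given this claim, the inductive step closes cleanly. For $i\neq i^*$, no color-$i$ edge is new, so any embedded copy of $M_{e_i}^{k,r_i}$ in color $i$ already sits in $c'$, contradicting the inductive hypothesis. For color $i^*$, the claim forces $A_1,\ldots,A_{e_{i^*}-1}$ to lie entirely in the old vertex set and to carry color $i^*$ in $c'$; since deleting the outermost edge of $M_{e_{i^*}}^{k,r_{i^*}}$ yields an ordered hypergraph isomorphic to $M_{e_{i^*}-1}^{k,r_{i^*}}$, this produces a forbidden copy in $c'$ and again contradicts the inductive hypothesis. The induction then establishes the lower bound $\OR{}{M_{e_1}^{k,r_1},\ldots,M_{e_t}^{k,r_t}}\geq k(1+E)$.
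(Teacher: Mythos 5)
Your proposal is correct, and when the recursion is unrolled it produces exactly the same coloring as the paper's construction. The paper proceeds non-inductively: it lays down intervals $L_t, \dots, L_1 = R_1, \dots, R_t$ of sizes $|L_1| = ke_1-1$, $|L_i| = (k-r_i)(e_i-1)$, $|R_i| = r_i(e_i-1)$ for $i\ge 2$, colors each edge $X$ by $c(X) = \max\{i : X\cap (L_i\cup R_i)\neq\emptyset\}$, and then rules out an $i$-colored copy by an extremal-vertex argument: each edge of the copy must have its minimum in $L_i$ or its maximum in $R_i$, but the gaps $p_{m+1}-p_m \geq k-r_i$ and $q_m-q_{m+1}\geq r_i$ mean the innermost edge's min and max are forced out of $L_i$ and $R_i$, a contradiction. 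Your inductive approach instead peels off one ``layer'' at a time (the outer ring $V_{\text{new}}$, corresponding to one annulus $L_{i^*}$--$R_{i^*}$) and replaces the gap counting with a cleaner position-in-sorted-order bound ($v_m \geq m$ and $v_m \leq k(1+E)-1-(ke_{i^*}-m)$) showing that every inner edge $A_j$, $j<e_{i^*}$, lives entirely in the old vertex set. Both verifications are sound; the paper's is more self-contained and exhibits the whole coloring at once, while yours is more modular, reduces the hard case ($i=i^*$) to the trivial observation that stripping the outermost edge from $M_{e_{i^*}}^{k,r_{i^*}}$ gives $M_{e_{i^*}-1}^{k,r_{i^*}}$, and handles the degenerate cases $r_{i^*}\in\{0,k\}$ uniformly rather than as a separate branch of the argument.
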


\begin{proof}
Let $N=k\left(1+\sum_{i=1}^t (e_i-1)\right)-1$. 
Let $L_1, \ldots, L_t, R_1,\ldots,R_t$ be intervals partitioning $[N]$, with $L_1=R_1$, such that for $i\in\{1,\ldots,t-1\}$, $\max L_{i+1} < \min L_{i}$  and $\max R_i<\min R_{i+1}$. 
Further, let $|L_1|=ke_1-1$, and for $i\in\{2,\ldots,t\}$ let $|L_i|=(k-r_i)(e_i-1)$ and $|R_i|=r_i(e_i-1)$. 
For an edge $X\in{[n]\choose k}$, let $c(X)=\max\{i : X \cap (L_i\cup R_i)\neq\emptyset\}$. 
The interval $L_1$ is too small for  $c$ to contain a copy of $M_{e_1}^{k,r_1}$ in color $1$. 

Suppose that $c$ contained a copy of $M_{e_i}^{k,r_i}$ in color $i$ for some $i\in\{2,\ldots,t\}$. 
If $r_i = k$, then $L_i = \emptyset$ and $|R_i| = k(e_i-1)$; therefore some edge of $M_{e_i}^{k,r_i}$ does not intersect $R_i$ and hence does not have color $i$.
The case $r_i = 0$ is similar, except $|L_i| = k(e_i-1)$ and $R_i = \emptyset$.

Now suppose $1 \leq r_i < k$.
Let $p_1,\ldots,p_{e_i}$ be the minimum vertices of the edges of $M_{e_i}^{k,r_i}$ and $q_1,\ldots,q_{e_i}$ be the set of maximum vertices, hence $p_1<p_2<\cdots<p_{e_i}<q_{e_i}<\cdots<q_1$. 
In fact, $p_m+k-r_i<p_{m+1}$ and $q_m-r_i>q_{m+1}$ for $m=1,\ldots,e_i-1$. 
Since each edge receives color $i$, either $p_m\in L_i$ or $q_m\in R_i$ for all $m$. 
However, because $|L_i|=(k-r_i)(e_i-1)$ and $|R_i|=r_i(e_i-1)$, it must be the case that $p_{e_i}\notin L_i$ and $q_{e_i}\notin R_i$. 
Therefore, $c$ avoids $M_{e_i}^{k,r_i}$ for all $i$. 
\end{proof}

When all nesting patterns are the same, the bound from Lemma~\ref{lma:matchlower} is sharp.

\begin{theorem}\label{cononmatch}
For positive integers $e_1,\ldots,e_t$, and $0\leq r\leq k$,
\[
\OR{}{M_{e_1}^{k,r},\ldots,M_{e_t}^{k,r}}= k\left(1+\sum_{i=1}^t (e_i-1)\right).
\]
\end{theorem}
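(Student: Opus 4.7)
The lower bound is Lemma~\ref{lma:matchlower}, so the task is to establish the matching upper bound $\OR{}{M_{e_1}^{k,r},\ldots,M_{e_t}^{k,r}}\leq N$ for $N = k\bigl(1+\sum_{i=1}^t(e_i-1)\bigr)$. My plan is a Cockayne--Lorimer style induction on $\sum_{i=1}^t e_i$, exploiting the fact that every matching in the list has the \emph{same} nesting pattern $r$ so that its outermost edge occupies a canonical set of positions.

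The base case is when $e_i=1$ for every $i$: then $N=k$ and any edge of $K_N^k$ is already a copy of $M_1^{k,r}$ in its color. For the inductive step, the key observation is that in $M_e^{k,r}$ the last-added edge $A_e$ consists of the $k-r$ smallest and the $r$ largest vertices, and the remaining $k(e-1)$ interior vertices carry a copy of $M_{e-1}^{k,r}$. So given a $t$-coloring $c$ of $E(K_N^k)$, I would single out the edge
\[
B \;=\; \{1,\ldots,k-r\}\cup\{N-r+1,\ldots,N\},
\]
(interpreted as $\{1,\ldots,k\}$ if $r=0$ and as $\{N-k+1,\ldots,N\}$ if $r=k$), and set $j=c(B)$. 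If $e_j=1$, then $B$ itself is the required $M_1^{k,r}$ in color $j$ and we are done.

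Otherwise I apply the induction hypothesis to $c$ restricted to the interval $[k-r+1,\,N-r]$, which contains exactly
\[
N-k \;=\; k\Bigl(1+\sum_{i}(e_i-1)-1\Bigr)
\]
vertices. This is precisely the ordered Ramsey number given by the theorem for the tuple $(e_1,\ldots,e_j-1,\ldots,e_t)$ (all with nesting pattern $r$), and the sum of parameters has strictly decreased. The hypothesis produces a monochromatic $M_{e_i'}^{k,r}$ in some color $i$; if $i\neq j$ we are immediately done, and if $i=j$ we adjoin $B$ to the interior copy of $M_{e_j-1}^{k,r}$. Since the $k-r$ vertices of $B$ lying in $\{1,\ldots,k-r\}$ sit strictly below and the $r$ vertices in $\{N-r+1,\ldots,N\}$ sit strictly above every interior vertex, the result is a legitimate $M_{e_j}^{k,r}$ in color $j$.

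The only real subtlety is bookkeeping: checking that $B$ truly plays the role of the outermost edge in the recursive definition of $M_{e_j}^{k,r}$, and that $|[k-r+1,N-r]|$ matches the inductive Ramsey target exactly. Both checks are immediate from the definition of $M_e^{k,r}$ and the arithmetic $N-k = k\bigl(1+\sum_i(e_i-1)-1\bigr)$, so once these alignments are recorded the argument closes in a few lines. No new ideas beyond the Cockayne--Lorimer peeling trick are needed, and the uniform nesting pattern is what makes a single edge $B$ suitable for every possible outcome color $j$.
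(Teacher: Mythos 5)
Your proof is correct and follows essentially the same Cockayne--Lorimer-style peeling induction used in the paper: both identify the color of the canonical outermost edge, delete its $k$ vertices, invoke the induction hypothesis on the interior $K_{N-k}^k$, and adjoin the edge when the recovered matching carries the same color. If anything you are slightly more careful than the paper's write-up, which glosses over the $e_j=1$ case and swaps the roles of $r$ and $k-r$ when naming the outermost edge (the definition of $M_e^{k,r}$ puts $k-r$ vertices below and $r$ above, matching your $B=\{1,\ldots,k-r\}\cup\{N-r+1,\ldots,N\}$, not the paper's $\{1,\ldots,r\}\cup\{N-k+r+1,\ldots,N\}$ — though this is harmless by the $M_e^{k,r}\cong M_e^{k,k-r}$ symmetry).
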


\begin{proof}
The lower bound follows from Lemma~\ref{lma:matchlower}.
We prove the upper bound by induction on $\sum_{i=1}^t e_i$. 
If $\sum_{i=1}^t e_i=t$, then $e_i=1$ for all $i$, so $\OR{}{M_{e_1}^{k,r},\ldots,M_{e_t}^{k,r}}=k$, and the claim holds.

Suppose that $\sum_{i=1}^t e_i>t$ and let $c$ be a $t$-coloring of $E(K_N^k)$ where $N=k\left(1+\sum_{i=1}^t (e_i-1)\right)$. 
Suppose that $c(\{1,\ldots,r\} \cup \{N-k+r+1,\ldots,N\}) = j$ for some $j \in [t]$. 
Let $G$ be the graph given by deleting the vertices in $\{1,\ldots,r\} \cup \{N-k+r+1, \ldots, N\}$ from $K_N^k$.
Let $e_j'=e_j-1$ and $e_i'=e_i$ for $i\neq j$. 
Notice that $G\cong K_{N-k}^k$ and $N-k=k\left(1+\sum_{i=1}^t (e_i'-1)\right)$. Therefore, since $\sum_{i=1}^t e_i'=\sum_{i=1}^t e_i-1$, the induction hypothesis implies that $G$ contains an $i$-colored copy of $M_{e_i'}^{k,r_i}$ for some $i$.
Since $e_i' = e_i$ when $i \neq j$, we have $i = j$.
Then the $j$-colored copy of $M_{e_j-1}^{k,r_j}$ along with the edge $\{1,\ldots,r\} \cup \{N-k+r+1,\ldots,N\}$ is a $j$-colored copy of $M_{e_j}^{k,r_j}$.
\end{proof}

Notice that $i(k,0)=1$ and $|Q_1(e_1,\ldots,e_t)|=\sum_{i=1}^t(e_i-1)$; thus, the $r=0$ case of Theorem \ref{cononmatch} agrees with the bound in Theorem \ref{mainthmq} using $\ell = 0$. 
Interestingly, as opposed to the large discrepancy between the ordered and ordinary Ramsey numbers of paths, we see that $\OR{t}{M_{e}^{2,r}}\leq 2\R{t}{M_{e}^{2,r}}$. 
However, this trend does not continue when the ordering of the matching is not nested as in $M_{e}^{k,r}$. 
Likely $M_{e}^{k,r}$ minimizes the ordered Ramsey number $\OR{t}{M}$ among all orderings of $k$-uniform matchings $M$ on $e$ edges.

Conlon, Fox, Lee and Sudakov \cite{CFLS} explore the ordered Ramsey numbers of $2$-uniform matchings. 

\begin{theorem}[Conlon, Fox, Lee and Sudakov \protect{\cite{CFLS}}]\label{thm:cfls}
Let $M_2,\dots,M_t$ be ordered $2$-uniform matchings, and let $p \geq 2$.
Then $\OR{}{K_{p}, M_2,\dots,M_t} \leq \OR{}{M_2,\dots,M_t}^{\lceil \lg p\rceil}$.
Therefore, for an ordered $2$-uniform matching $M$ with $e$ edges, $\OR{t}{M}\leq (2e)^{\lceil\lg(2e)\rceil^{t-1}} \leq 2^{\lceil\lg(2e)\rceil^t}.$
\end{theorem}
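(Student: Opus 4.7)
The plan is to establish the first inequality by induction on $\lceil \lg p\rceil$, with trivial base case $p=2$: any color-$1$ edge is a $K_2$, so $\OR{}{K_2, M_2, \ldots, M_t} \leq \OR{}{M_2, \ldots, M_t}$. For $p \geq 3$, since $\lceil \lg p\rceil = \lceil \lg \lceil p/2\rceil\rceil + 1$, the induction reduces to proving the two-factor recursion
\[
    \OR{}{K_p, M_2, \ldots, M_t} \;\leq\; R \cdot \OR{}{K_{\lceil p/2\rceil}, M_2, \ldots, M_t},
\]
where $R := \OR{}{M_2, \ldots, M_t}$. Iterating this recursion immediately yields the stated bound.

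To prove the recursion, I would set $N$ equal to the right-hand side and partition $[N]$ into $R$ consecutive intervals $B_1 < \cdots < B_R$, each of size $\OR{}{K_{\lceil p/2\rceil}, M_2, \ldots, M_t}$. Fix a $t$-coloring $c$ of $K_N$. Inside each interval $B_j$, either some $M_i$ appears in color $i$ for some $i \in \{2,\ldots,t\}$ (and we are done), or I extract a color-$1$ clique $X_j \subseteq B_j$ of size $\lceil p/2\rceil$. I will assume the latter happens for every $j$.

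Next, construct an auxiliary $t$-coloring $c'$ of the super-graph $K_R$ on the blocks: for $a < b$, set $c'(\{B_a, B_b\}) = 1$ if every cross edge between $X_a$ and $X_b$ has $c$-color $1$, and otherwise let $c'(\{B_a,B_b\})$ equal the color of any one chosen cross edge of color at least $2$. If some super-edge gets color $1$, then $X_a \cup X_b$ is a color-$1$ clique of size $2\lceil p/2\rceil \geq p$ and we are done. Otherwise $c'$ uses only colors $\{2,\ldots,t\}$, so the definition of $R$ supplies some color $i^\star \in \{2,\ldots,t\}$ together with a copy of $M_{i^\star}$ in $c'$; the chosen representative edges under its super-edges then pull back to a color-$i^\star$ copy of $M_{i^\star}$ in $c$.

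The step I expect to need the most care is verifying that this pull-back preserves the ordered embedding: because the $B_a$ are pairwise disjoint intervals, the order of any two representative cross edges in $[N]$ is determined entirely by which blocks their endpoints lie in, so the ordered matching structure lifts from $c'$ to $c$ verbatim. For the corollary on $\OR{t}{M}$, every ordered matching on $e$ edges embeds in $K_{2e}$, so applying the first inequality with $p = 2e$ and $t-1$ copies of $M$ gives $\OR{t}{M} \leq \OR{}{K_{2e},M,\ldots,M} \leq \OR{t-1}{M}^{\lceil \lg 2e\rceil}$; iterating down to $\OR{1}{M} = 2e$ then yields $\OR{t}{M} \leq (2e)^{\lceil \lg 2e\rceil^{t-1}} \leq 2^{\lceil \lg 2e\rceil^t}$.
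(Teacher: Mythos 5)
Your proof is correct and takes essentially the same divide-and-conquer approach as the paper's Lemma~\ref{lem:iterated} (the $k$-uniform generalization of this cited result): partition into $\OR{}{M_2,\dots,M_t}$ blocks, extract a color-$1$ clique from each, recolor the supergraph on the blocks, and lift a monochromatic matching back via one representative cross-edge per super-edge. The only differences are cosmetic --- you halve $p$ at each step rather than tracking powers of two (via $G_s^2 \cong K_{2^s}$), and you argue by direct case split rather than by contradiction.
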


Compare the upper bound here with the lower bound from Theorem~\ref{conlonthm}, showing that this upper bound is nearly tight.
In terms of $e$, the bound above is quasi-polynomial, but in terms of $t$ the bound is doubly-exponential.

Define the $k$-uniform graph $G_{s}^k$ iteratively on $s$ as follows: let $G_{0}^k$ consist of a single vertex, and for $s\geq 1$, let $G_{s}^k$ consist of $k$ disjoint, consecutive copies of $G_{s-1}^k$, and introduce every $k$-uniform edge consisting of exactly one vertex from each copy. 
Notice that $G_{s}^2=K_{2^s}$. 

The above definition of $G_s^k$ uses a ``concatenation'' step to glue $k$ copies of $G_{s-1}^k$ to form $G_s^k$.
We now state an equivalent definition, which we refer to as the ``blow-up'' construction of $G_s^k$, that uses an ``expansion'' step that is key to the proof of Lemma~\ref{thm:nesting}.
Let $V(G_{s-1}^k)=\{x_1,\ldots,x_{k^{s-1}}\}$ with $x_i < x_{i'}$ if and only if $i < i'$. 
Duplicate each vertex $x_i$ $k$ times to form a list of vertices $x_i^{(1)}, \dots, x_i^{(k)}$.
Two vertices $x_{i}^{(j)}$ and $x_{i'}^{(j')}$ are ordered as $x_i^{(j)} < x_{i'}^{(j')}$ if $i < i'$, or $i = i'$ and $j < j'$.
The graph $G_s^k$ has vertex set $\{ x_{i}^{(j)} : i \in [k^{s-1}], j \in [k]\}$ and the edges of $G_s^k$ are of the form $\{x_i^{(1)},\dots,x_i^{(k)}\}$ for $i \in [k^{s-1}]$ or $\{x_{i_1}^{(j_1)}, \dots, x_{i_k}^{(j_k)}\}$ for every edge $\{x_{i_1},\dots,x_{i_k}\}$ in $G_{s-1}^k$ and any tuple $(j_1,\dots,j_k) \in [k]^k$.

Using the graph $G_{s}^k$, we attain a bound on the $t$-color ordered Ramsey numbers of certain ``nice'' orderings of $k$-uniform matchings. 
This bound is a generalization of Theorem~\ref{thm:cfls}, where $G_s^k$ replaces the complete graph.

\begin{lemma}\label{lem:iterated}
Let $M_2,\ldots,M_t$ be any $k$-uniform ordered matchings and $s \geq 0$.
Then 
\[
\OR{}{G_{s}^k,M_2,\ldots,M_t}\leq  \OR{}{M_2,\ldots,M_t}^s.
\]
\end{lemma}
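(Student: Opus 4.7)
The plan is an induction on $s$, with the base case $s=0$ being immediate: $G_0^k$ is a single vertex, so $\OR{}{G_0^k,M_2,\ldots,M_t}=1=\OR{}{M_2,\ldots,M_t}^0$.

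For the inductive step, let $R=\OR{}{M_2,\ldots,M_t}$, set $N=R^s$, and take any $t$-coloring $c\colon E(K_N^k)\to[t]$. I would partition $[N]$ into $R$ consecutive intervals $I_1,\ldots,I_R$ of size $R^{s-1}$ and invoke the inductive hypothesis on each interval: either some $I_i$ contains a color-$j$ copy of $M_j$ for $j\geq 2$ and we are done, or every $I_i$ contains a color-$1$ copy $H_i$ of $G_{s-1}^k$ on a vertex set $V_i\subseteq I_i$.

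The key structural observation is that the concatenation definition of $G_s^k$ guarantees that, for any $\{i_1<\cdots<i_k\}\subseteq[R]$, the union $V_{i_1}\cup\cdots\cup V_{i_k}$, together with the internal edges of each $H_{i_m}$ and every ``cross edge'' $\{v_{i_1},\ldots,v_{i_k}\}$ with $v_{i_m}\in V_{i_m}$, realizes an ordered copy of $G_s^k$ as soon as all such cross edges have color $1$. So the argument splits into two cases. If some $k$-tuple of intervals has all cross edges in color $1$, we immediately have a monochromatic $G_s^k$. Otherwise, for every $\{i_1<\cdots<i_k\}\subseteq[R]$, I may pick some cross edge $f(i_1,\ldots,i_k)$ of color in $\{2,\ldots,t\}$ and define an auxiliary $(t-1)$-coloring $c'$ of $K_R^k$ by $c'(\{i_1,\ldots,i_k\})=c(f(i_1,\ldots,i_k))$.

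Since $c'$ avoids color $1$ on $R=\OR{}{M_2,\ldots,M_t}$ vertices, by the definition of $R$ it contains a color-$j$ copy of $M_j$ for some $j\in\{2,\ldots,t\}$, on some vertex set $\{r_1<\cdots<r_{ke_j}\}\subseteq[R]$. To finish, I would transfer this copy to $c$ by replacing each edge $\{r_{a_1},\ldots,r_{a_k}\}$ of the copy with $f(r_{a_1},\ldots,r_{a_k})$, obtaining $e_j$ color-$j$ edges in $c$; disjointness is automatic since $M_j$ is a matching and the intervals $I_{r_a}$ are disjoint. The one step that requires care is verifying that the transferred matching inherits the correct order type: this works because each interval $I_{r_a}$ contributes a single vertex and the intervals are arranged in increasing order, so the linear order on the $ke_j$ selected vertices matches $r_1<\cdots<r_{ke_j}$, which in turn matches the intended ordering of the vertices of $M_j$.
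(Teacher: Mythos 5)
Your proposal is correct and follows essentially the same route as the paper: induction on $s$, partition into $R$ consecutive intervals, apply the inductive hypothesis within each interval, and either find a monochromatic $G_s^k$ or build an auxiliary $(t-1)$-coloring of $K_R^k$ from badly-colored cross edges and transfer an $M_j$-copy back. The only cosmetic difference is that you define cross edges over the vertex sets $V_i$ of the embedded $G_{s-1}^k$ copies, whereas the paper's auxiliary coloring is phrased over the full intervals; both yield the same contradiction.
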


\begin{proof}
We prove by induction on $s$. 
When $s = 0$, the graph $G_{0}^k$ consists of a single vertex, and hence every coloring of $K_1^k$ contains a copy of $G_{s}^k$ in every color.

Suppose that $s > 0$ and let $r = \OR{}{M_2,\dots,M_t}$.
Let $c$ be a $t$-coloring of $K_{r^{s}}^k$ that avoids a $j$-colored copy of $M_j$ for each $j \in \{2,\dots,t\}$ and avoids a 1-colored copy of $G_{s}^k$.
Let $V_1,\ldots,V_r$ be equal-sized intervals partitioning $[r^{s}]$ such that $\max V_i < \min V_{i+1}$ for $i \in [r-1]$.
By the induction hypothesis, restricting $c$ to $V_i$ yields either a copy of $G_{(s-1)}^k$ in color $1$ or a $j$-colored copy of $M_j$ for some $j\in\{2,\ldots,t\}$.
Since $c$ contains no $j$-colored copy of $M_j$, each $V_i$ contains a copy of $G_{(s-1)}^k$. 
Since $c$ avoids $G_{s}^k$, then for any indices $1\leq i_1<\cdots< i_k\leq r$ there must be $x_{i_j} \in V_{i_j}$ such that $c(x_{i_1},\ldots,x_{i_k})\neq 1$. 
Define a coloring of $E(K_r^k)$ by letting $c'(v_{i_1},\ldots,v_{i_k})$ be any color in $\{c(x_{i_1},\ldots,x_{i_k}):x_{i_j}\in V_{i_j}\}\setminus\{1\}$. 
By the definition of $r$, $c'$ contains an $j$-colored copy of $M_j$ for some $j\in \{2,\ldots,t\}$ and therefore $c$ also contains a $j$-colored copy of $M_j$; a contradiction.
\end{proof}

Let $M$ be an ordered $k$-uniform matching on vertex set $[ke]$.
We say that $M$ is \emph{$k$-nestable} if there exist disjoint intervals $I_1, \dots, I_k$, some of which may be empty or degenerate, spanning $[ke]$ such that $1 \in I_1, ke \in I_k$, where each edge in $M$ either is contained in some interval $I_j$ or spans all intervals $I_1, \dots, I_k$, and for each $j \in [k]$ the edges contained within $I_j$ form a matching, denoted $M_j$, that is either $k$-nestable or empty.
A set of intervals $I_1, \dots, I_k$ satisfying these properties is a \emph{$k$-nesting} of $M$. Notice that every matching contained as a subgraph of $G_s^k$ for some $s$ must be $k$-nestable; in particular, every $2$-uniform matching is $2$-nestable as $G_s^2\cong K_{2^s}$. The following lemma provides the converse to this observation.

\begin{lemma}\label{lem:nesting}
If $M$ is a $k$-nestable ordered matching with $e$ edges for $k\geq 3$, then $M$ is contained within $G_{2e-1}^k$.
\end{lemma}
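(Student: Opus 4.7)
The plan is to strengthen the statement to allow ``free'' vertices interspersed with the matching vertices, and then induct on $e$. Specifically, I will prove the following: for any ordered set $V$ with distinguished subset $V_M\subseteq V$ of size $ke$ carrying a $k$-nestable matching $M$ with $e\geq 0$ edges, if $f=|V\setminus V_M|$ satisfies $e+f\geq 1$, then there is an order-preserving injection $V\to V(G_s^k)$ with $s=2e+f-1$ that sends $V_M$ to a copy of $M$. The lemma is the $f=0$ case. The base case $e=0$ is immediate because no edges need to be preserved.

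For the inductive step I will fix a $k$-nesting $I_1,\dots,I_k$ of $M$, write $M_j$ for the submatching contained in $I_j$ (with $e_j$ edges), and let $e_0$ be the number of spanning edges, so $e_0+\sum_j e_j=e$. The crucial observation --- which is where the hypothesis $k\geq 2$ enters --- is that $e_j\leq e-1$ for every $j$: if some $M_{j^*}$ contained all of $M$, then $V(M_{j^*})=V_M$ would force $I_{j^*}=V_M$ and all other intervals empty, but $1\in I_1$ and $ke\in I_k$ would then force $j^*=1=k$, contradicting $k\geq 2$. I will then use the concatenation structure $G_s^k=C_1\cup\cdots\cup C_k$ with $C_j\cong G_{s-1}^k$, extending the nesting to a partition $I_1^*<\cdots<I_k^*$ of $V$ by assigning each free vertex to the appropriate chunk (the choice being arbitrary at the interval boundaries). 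Each $I_j^*$ then carries the $k$-nestable matching $M_j$ together with $e_0+f_j$ free vertices --- the $e_0$ spanning-edge representatives plus its share $f_j$ of the inherited free vertices. Applying the IH at $(e_j,e_0+f_j)$, valid since $e_j<e$, produces an embedding $\phi_j$ of $I_j^*$ into $G_{s_j}^k$ with $s_j=2e_j+e_0+f_j-1$.

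The required inequality $s_j\leq s-1$ follows from combining the two simple bounds $e-e_j\geq e_0$ (from $e\geq e_0+e_j$) and $e-e_j\geq 1$ (from $e_j\leq e-1$) into $2(e-e_j)\geq e_0+1$. Thus each $\phi_j$ factors through $C_j$, and concatenating the $k$ pieces yields the desired $\phi:V\to V(G_s^k)$: each $M_j$-edge stays inside its $C_j$ and is therefore an edge of $G_s^k$, while each spanning edge of $M$ contributes exactly one vertex to each $C_j$ and so maps to a cross edge of the concatenation --- again precisely an edge of $G_s^k$. The main conceptual obstacle is recognizing that the naive recursion directly on $I_j$ cannot close, because each $I_j$ contains ``extra'' spanning-edge vertices on top of $V(M_j)$; introducing the $f$ parameter is exactly the slack that lets the IH absorb these vertices (together with any previously-passed free ones). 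Once this strengthening is in place, the numerical bookkeeping collapses to the single inequality above, and the blow-up/concatenation structure of $G_s^k$ handles everything else automatically.
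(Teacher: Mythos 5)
Your proof is correct, and it takes a genuinely different route from the paper's. The paper embeds each sub-matching $M_j$ into $G_{2e_j-1}^k$ by the induction hypothesis and then ``expands'' these embeddings via $k$-ary digit representations of the function $\ell(v)$ so as to create gaps in $G_{2e'-2}^k$ large enough to insert the spanning-edge vertices; this is where the paper's hypothesis $k\geq 3$ is essential (one needs $e'-e_j\leq(k-1)^{2(e'-e_j)-2}$, which fails for $k=2$ when $e'-e_j=2$). Your approach avoids all of that: by strengthening the statement to carry a count $f$ of ``free'' vertices and inducting on $e$ alone, the spanning-edge vertices passed down to each $I_j^*$ are simply absorbed into the free count $e_0+f_j$, and the entire expansion machinery collapses to the single inequality $2(e-e_j)\geq e_0+1$, which you verify from $e-e_j\geq e_0$ and $e-e_j\geq 1$. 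This makes your argument shorter, avoids the awkward parameter $e'$, needs only the concatenation description of $G_s^k$ (not the blow-up description), and in fact works for $k\geq 2$, so the restriction to $k\geq 3$ is an artifact of the paper's method rather than of the result. The one point worth making explicit in a write-up is the claim $e_j\leq e-1$: you correctly argue that $e_j=e$ would force $I_{j^*}=[ke]$ and all other $I_i$ empty, contradicting $1\in I_1$, $ke\in I_k$, $k\geq 2$; and you should also note that when $I_j^*=\emptyset$ the inductive application is vacuous. Your observation that the na\"ive recursion on $I_j$ alone cannot close because of the extra spanning-edge vertices is exactly the obstacle the paper circumvents with $k$-ary expansion, and your $f$-parameter is a cleaner fix.
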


\begin{proof}
We prove by induction on $e$. The statement is trivial when $e = 1$ as both $M$ and $G_1^k$ are a single $k$-uniform edges.
Suppose the ordered $k$-uniform matching $M$ has vertex set $[ke]$ for $e\geq 2$.

Let $I_1, \dots, I_k$ be a $k$-nesting of $M$, and let $M_1, \dots, M_k$ be the matchings induced by the edges within each interval.
For $j \in [k]$, let $e_j$ be the number of edges in the matching $M_j$. Define $M'$ to be the matching $M - \bigcup_{j=1}^k M_j$. Since $e_j < e$, by the inductive hypothesis, there exist order-preserving graph embeddings $\pi_j : V(M_j) \to V(G_{2e_j-1}^k)$ from $M_j$ to a subgraph within $G_{2e_j-1}^k$. 

If $M'$ happens to be empty, for $v\in V(M_j)$ define $\pi'(v)$ to be the copy of $\pi_j(v)$ in the first copy of $G_{2e_j-1}^k$ contained within $G_{2\max_j e_j -1}^k$. Further, define $\pi''(v)$ to be the copy of $\pi'(v)$ in the $j$th copy of $G_{2\max_j e_j-1}^k$ contained within $G_{2\max_j e_j}^k$. It is readily seen that $\pi''$ is an embedding of $M$ into $G_{2\max_j e_j}^k$. Because $e>\max_j e_j$, the claim follows.

Now suppose that $M'$ is nonempty, and let $e' = (e - \sum_{j=1}^k e_j) + \max_j e_j$. Because $M'$ is nonempty, $e'>\max_j e_j$. We will show that $M$ is contained within $G_{2e'-1}^k$. We begin by embedding $\bigcup_{j=1}^k M_j$ into $G_{2e'-1}^k$ using the embeddings $\pi_1,\dots, \pi_k$.
This comes in two steps: first the embedding of $M_j$ is ``expanded'' into $G_{2e'-2}^k$ by using the blow-up construction of $G_s^k$, then the $k$ embeddings into $G_{2e'-2}^k$ are ``concatenated'' to allow for an embedding of $M$ into $G_{2e'-1}^k$.
Let $v$ be a vertex in $M_j$.
Since $\pi_j(v) \in V(G_{2e_j-1}^k)$, we must convert $\pi_j(v)$ to a vertex in $G_{2e'-2}^k$.
Let $\ell(v)$ be the number of vertices in $V(M') \cap I_j$ less than $v$.
There can be at most $|E(M')|$ such vertices, so $0 \leq \ell(v) \leq |E(M')| \leq (e'-e_j) \leq k^{2(e'-e_j)-2}$.
Thus, there exists a $k$-ary representation of $\ell(v)$ as $\sum_{i=0}^{2(e'-e_j)-3} a_ik^i$ for nonnegative integers $a_0, \dots, a_{2(e'-e_j)-3}$ satisfying $0 \leq a_i < k$.
Define a list $x_{0}$, $x_{1}$, $\dots$, $x_{2(e'-e_j)-2}$ iteratively as $x_{0} = \pi_j(v)$ and $x_{i+1} = x_i^{(a_i)}$, where $x_i$ is a vertex in $G_{2e_j+i-1}^k$ and $x_i^{(a)}$ is the $a$th copy of $x_i$ in $G_{2e_j+i}^k$ as in the blow-up construction of $G_s^k$.
Let $\pi'(v) = x_{2(e'-e_j)-2}$, which is a vertex in  $G_{2e'-2}^k$.

Observe that for two consecutive vertices $u<v$ in $M_j$, there are at least $(k-1)^{2(e'-e_j)-2}$ vertices between $\pi'(u)$ and $\pi'(v)$ in $G_{2e'-2}^k$ because $\ell(v)\geq\ell(u)$, and that $|V(M')\cap[u,v]|=\ell(u)-\ell(v) \leq e'-e_j\leq (k-1)^{2(e'-e_j)-2}$ because $k\geq 3$.
Also note that if $u=\min V(M_j)$, then there are exactly $\ell(u)$ vertices in $G_{2e'-2}^k$ less than $\pi'(u)$, and if $v=\max V(M_j)$, then there are at least $|E(M')|-\ell(v)$ vertices in $G_{2e'-2}^k$ greater than $\pi'(v)$.
Now  for $v \in V(M_j)$, define $\pi''(v)$ to be the copy of $\pi'(v)$ in the $j$th copy of $G_{2e'-2}^k$ within $G_{2e'-1}^k$.

We now select vertices in $G_{2e'-1}^k$ to embed the vertices of $M'$.
Consider an interval $I_j$, let $v_{\min}$ be the least vertex in $M_j$, and let $v_{\max}$ be the greatest vertex in $M_j$.
There are $\ell = \ell(v_{\min})$ vertices $u_1,\dots,u_\ell$ of $M'$ in $I_j$ that precede $v_{\min}$, and the same number of vertices $x_1,\dots,x_\ell$ in the $j$th copy of $G_{2e'-2}^k$ less than $\pi''(v_{\min})$; hence we define $\pi''(u_i) = x_i$ for $i \in [\ell]$.
For two consecutive vertices $u\leq v$ of $M_j$, there are $m=\ell(v)-\ell(u)$ vertices $u_1,\dots,u_m$ of $M'$ between $u$ and $v$, and at least $(k-1)^{2(e'-e_j)-2}\geq\ell(v)-\ell(u)$ vertices in the $j$th copy of $G_{2e'-2}^k$ between $\pi''(u)$ and $\pi''(v)$. Therefore, we can select the vertices $\pi''(u_1),\dots,\pi''(u_m)$ in order.
Finally, there are $n=|E(M')|-\ell(v_{\max})$ vertices $u_1,\dots, u_n$ of $M'$ in $I_j$ that are greater than $v_{\max}$, and there are at least $|E(M')|-\ell(v_{\max})$ vertices in the $j$th copy of $G_{2e'-2}^k$ greater than $\pi''(v_{\max})$, so we can select the vertices $\pi''(u_1), \dots, \pi''(u_n)$ in order.
The resulting injection $\pi'' : V(M) \to G_{2e'-1}^k$ is an embedding of $M$ into $G_{2e'-1}^k$.
\end{proof}

Note that in the proof of Lemma~\ref{lem:nesting}, the ``expansion'' step takes $2(e'-e_j)-1$ iterations. 
In the case of one of the standard nesting matchings $M_e^{k,r}$, this is exactly one iteration.
Thus, even for a matching $M_e^{k,r}$ where the ordered Ramsey number is small, it is not possibly to embed $M_e^{k,r}$ into $G_s^k$ for any $s<2e-1$ whenever $1\leq r\leq k-1$. When a $k$-nesting contains two nonempty matchings $M_j$ and $M_{j'}$, or when there are multiple edges in $M'$, the iterative process given above may require fewer than $2e-1$ steps.
However, it does appear that $\Omega(e)$ steps are required for most $k$-nested matchings on $e$ edges, as most of the edges will likely live in $M'$.

The following theorem follows from Lemmas~\ref{lem:iterated} and \ref{lem:nesting} and the fact that $\OR{1}{M} = ek$ if $M$ is a $k$-uniform ordered matching with $e$ edges.

\begin{theorem}\label{thm:nesting}
Let $k \geq 3$ and $e \geq 2$.
If $M$ is a $k$-nestable ordered matching with $e$ edges, then $\OR{t}{M} \leq (ek)^{(2e-1)^{t-1}} = 2^{(2e-1)^{t-1}\lg(ek)}$.
\end{theorem}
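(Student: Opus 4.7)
The plan is to prove Theorem~\ref{thm:nesting} by induction on $t$, with the two preceding lemmas doing essentially all of the combinatorial work; what remains is to stitch them together and verify the resulting tower of exponents.

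For the base case $t=1$, note that a $k$-uniform matching $M$ on $e$ edges has exactly $ek$ vertices, so $\OR{1}{M}=ek$ (indeed, the complete $k$-uniform hypergraph $K_{ek}^k$ in a single color contains $M$ as a subgraph regardless of the ordering, and no smaller vertex set suffices). This matches the claimed bound $(ek)^{(2e-1)^{0}} = ek$.

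For the inductive step, suppose the bound holds for $t-1$ colors. Because $M$ is $k$-nestable with $e$ edges and $k\geq 3$, Lemma~\ref{lem:nesting} guarantees that $M$ embeds into $G_{2e-1}^k$ as an ordered subgraph. Consequently, any $t$-coloring of a complete $k$-uniform hypergraph that avoids a $1$-colored copy of $G_{2e-1}^k$ automatically avoids a $1$-colored copy of $M$, which gives the monotonicity
\[
\OR{t}{M} \;\leq\; \OR{}{G_{2e-1}^k,\underbrace{M,\ldots,M}_{t-1}}.
\]
Now apply Lemma~\ref{lem:iterated} with $s=2e-1$ and $M_2=\cdots=M_t=M$ to obtain
\[
\OR{}{G_{2e-1}^k,\underbrace{M,\ldots,M}_{t-1}} \;\leq\; \OR{t-1}{M}^{\,2e-1}.
\]
Combining these two inequalities and invoking the inductive hypothesis $\OR{t-1}{M}\leq (ek)^{(2e-1)^{t-2}}$ yields
\[
\OR{t}{M} \;\leq\; \Bigl((ek)^{(2e-1)^{t-2}}\Bigr)^{2e-1} \;=\; (ek)^{(2e-1)^{t-1}},
\]
and rewriting $(ek)^{(2e-1)^{t-1}} = 2^{(2e-1)^{t-1}\lg(ek)}$ gives the stated form.

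There is no real obstacle here: both Lemma~\ref{lem:iterated} and Lemma~\ref{lem:nesting} are set up exactly to fit this induction, and the arithmetic collapses cleanly since $(2e-1)^{t-2}\cdot(2e-1)=(2e-1)^{t-1}$. The only point that deserves a line of justification is the passage from $\OR{t}{M}$ to $\OR{}{G_{2e-1}^k,M,\ldots,M}$, which relies on the trivial but essential observation that containment of ordered hypergraphs is monotone under replacing one pattern by a larger one.
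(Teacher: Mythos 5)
Your proof is correct and is exactly the argument the paper intends: the paper's one-line justification (``follows from Lemmas~\ref{lem:iterated} and \ref{lem:nesting} and the fact that $\OR{1}{M}=ek$'') is precisely your induction on $t$ made explicit, with Lemma~\ref{lem:nesting} supplying the embedding $M\subseteq G_{2e-1}^k$, Lemma~\ref{lem:iterated} (with $s=2e-1$) supplying the recursion $\OR{t}{M}\leq\OR{t-1}{M}^{2e-1}$, and $\OR{1}{M}=ek$ the base case.
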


This extends the previous bound on 2-uniform matchings~\cite{CFLS}.
While the bound remains doubly-exponential in terms of $t$, the bound has increased from quasi-polynomial to exponential in terms of $e$. Most notably, this bound is only polynomial in $k$.

Notice that for these ``nice'' orderings of a $k$-uniform matching on $e$ edges, the bound on the ordered Ramsey number $\OR{t}{M}$ is only slightly larger than the ordered Ramsey number $\OR{t}{P_e^{k,\ell}}$ of the naturally-ordered $(k,\ell)$-path on $e$ edges when $i(k,\ell) = 3$.

We say that a $k$-uniform ordered matching $M$ is \emph{simply interlacing} if for any pair of distinct edges $A, B$ in $M$, where $A = \{ a_1 < a_2 < \cdots < a_k\}$ and $B = \{ b_1 < b_2 < \cdots < b_k\}$ either $a_i$ and $b_i$ are consecutive in $A \cup B$ for each $i$ or there is some $i$ where $a_i < b_1 < b_k < a_{i+1}$ (where $a_0 = -\infty$ and $a_{k+1} = +\infty$). If the former holds, we say that $A$ and $B$ \emph{interlace}, and if the latter holds, we say that $A$ and $B$ \emph{nest}. Notice that every $2$-uniform matching is simply interlacing.

\begin{cor}\label{cor:simplyintersecting}
If $k \geq 3$, $e \geq 2$, and $M$ is a simply-interlacing $k$-uniform ordered matching with $e$ edges, then $M$ is $k$-nestable; hence $\OR{t}{M} \leq (ek)^{(2e-1)^{t-1}} = 2^{(2e-1)^{t-1}\lg(ek)}$.
\end{cor}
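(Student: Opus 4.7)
The Ramsey-number bound follows from Theorem~\ref{thm:nesting} as soon as we show that every simply-interlacing matching is $k$-nestable, so my plan is to focus on this structural claim and prove it by induction on the number of edges $e$. The base case $e=1$ is immediate: the single edge on $[k]$ admits the trivial nesting $I_j=\{j\}$.

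For the inductive step, I would work with the \emph{outermost} edges of $M$---those not nested inside any other edge---and build the $k$-nesting from their collective structure. Since neither of two outermost edges can lie inside the other, the simply-interlacing dichotomy forces them to interlace, and a short rank-counting argument then yields a block decomposition: in the sorted order of the union of outermost edges, the $i$-th vertex of every outermost edge lies in a common slot $S_i$ of positions $(i-1)m+1,\dots,im$, where $m$ is the number of outermost edges. Setting $t_i=\min S_i$ and $I_i=[t_i,t_{i+1}-1]$ for $i<k$ and $I_k=[t_k,ke]$ produces intervals in which every outermost edge automatically places one vertex in each $I_i$ and therefore spans all intervals.

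The crucial remaining step is to show that every inner edge $C$ is contained in some single $I_j$. Because $C$ is inner it is nested in an outer edge $O$ with $C\subseteq(o_\alpha,o_{\alpha+1})$ for some $\alpha$. The plan is to combine this constraint with the simply-interlacing relations of $C$ against the two extremal outer edges---the one achieving $o_{\alpha+1}=t_{\alpha+1}$ and the one achieving $o_{\alpha}=b_\alpha$---and run through a short case analysis (nest vs.\ interlace, with nesting subcases indexed by which gap $C$ lands in) to force either $\max C<t_{\alpha+1}$, placing $C\subseteq I_\alpha$, or $\min C\geq t_{\alpha+1}$, placing $C\subseteq I_{\alpha+1}$. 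Each sub-matching $M_i$ of edges contained in $I_i$ then inherits the simply-interlacing property from $M$ and has strictly fewer than $e$ edges, so the inductive hypothesis closes the recursion.

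The main obstacle I anticipate is precisely the final ``no straddling'' case analysis. A naive peel-off of only the edge through vertex $1$ does not suffice, because in an interlacing pair a partner edge can place two of its vertices on the same side of a single vertex's slot, and one needs the simultaneous constraints supplied by the extremal outer edges to rule this out. Working with the entire outer shell and its block decomposition, rather than any single edge, is the minimum machinery needed to make the inductive step go through cleanly.
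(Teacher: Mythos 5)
The inductive step breaks at its first move. The set of ``outermost'' edges of a simply-interlacing matching---under either reasonable reading of ``nested inside''---need not pairwise interlace, and may even be empty. Consider $M=P_e^{k,0}$, whose edges are the consecutive disjoint blocks $\{1,\dots,k\},\{k+1,\dots,2k\},\dots$. If ``nested inside'' means strict containment of spans, then no edge is nested inside another, so all $e$ edges are outermost; yet any two of them \emph{nest} (each lies entirely in an unbounded gap of the other) rather than interlace, so the rank-counting argument fails and there is no block decomposition into slots $S_i$. If instead ``nested inside'' is read according to the paper's dichotomy (one edge lying in a possibly unbounded gap of the other), then in this same example every edge is nested inside every other, and the outermost set is empty. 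The underlying issue is that the simply-interlacing dichotomy is ``interlace or nest,'' where nesting includes one edge lying wholly before or wholly after the other; the inference ``neither lies inside the other, hence they interlace'' silently excludes exactly this side-by-side case, which is common.

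The paper's proof addresses this with an asymmetric device you would need to import: it defines $A\prec B$ to hold when $A$ lies before $B$ or in an \emph{internal} gap of $B$, but \emph{not} when $A$ lies after $B$. This ensures the edge containing $ke$ is always $\prec$-maximal (so maximal elements exist), and it then takes as ``spanning'' the $\prec$-maximal edges together with every edge that interlaces some $\prec$-maximal edge, building the intervals $I_j$ from the coordinates of this spanning set. That asymmetric treatment of ``before'' versus ``after'' is the key missing ingredient in your sketch: without it, the outer shell you try to peel off does not have the alternating structure that your block decomposition, and therefore the rest of the induction, depends on.
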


\begin{proof}
By Theorem~\ref{thm:nesting}, it suffices to show that $M$ is $k$-nestable.
Define a relation on the edges of $M$ by $A \prec B$ if $b_i<a_1<a_k<b_{i+1}$ for some $0\leq i\leq k-1$, where $A=\{a_1<\dots<a_k\}$ and $B=\{b_1<\dots<b_k\}$ (again under the convention that $b_0=-\infty$). While $\prec$ is not a partial order (as transitivity fails), it does admit maximal elements. Let $A_1, \dots, A_p$ be the edges of $M$ that are either maximal with respect to $\prec$ or interlace with some maximal edge. Therefore, $A_i$ and $A_{i'}$ interlace. We refer to these edges as \emph{spanning edges}.

For each $i \in [p]$, label the vertices in $A_i$ as $A_i = \{ a_{i,1} < \cdots < a_{i,k}\}$; also let $a_{i,0} = -\infty$ and $a_{i,k+1} = +\infty$.
Observe that for each $j \in [k-1]$, we have $\max_{i \in [p]} a_{i,j} < \min_{i\in [p]} a_{i,j+1}$, as otherwise there is a pair of edge $A_i$ and $A_{i'}$ where $a_{i,j} > a_{i',j+1}$ and hence $a_{i,j}$ and $a_{i',j}$ are not consecutive in $A_i\cup A_{i'}$.
Therefore, we can define disjoint intervals $I_1, \dots, I_k$ such that $I_j = [\min_{i\in[p]} a_{i,j}, \max_{i\in[p]}a_{i,j}]$. 
These intervals do not necessarily span $V(M)$, but we will expand them to include vertices not in $A_1, \dots, A_p$.

For a non-spanning edge $B$ in $M$, there is at least one edge $A_i$ where $B \prec A_i$.
Therefore, there exists a $j \in \{0,\dots,k-1\}$ such that $a_{i,j} < \min B < \max B < a_{i,j+1}$.
Observe that since $k \geq 3$, for any $i' \in [p]$ the edge $B$ is comparable to $A_{i'}$ since there is some $a_{i',j'}$ not in the interval $[a_{i,j},a_{i,j+1}]$.
While it may not be the case that $B \prec A_{i'}$, it is true that for every $i' \in [p]$ and $a_{i', j + c_{i'}} < \min B < \max B < a_{i', j+c_{i'}+1}$ for some $c_{i'} \in \{-1,0,+1\}$, as $A_{i'}\prec B$ only when $a_{i',k}<\min B$.
Therefore, let $j_B$ be the minimum integer satisfying $j_B \geq 1$ and $j_B \geq j + c_{i'}$ for each $i' \in [p]$.

If  $B, B'$ are two non-spanning edges in $M$ and $j_B < j_{B'}$, then $\max B < a_{i,j_B+1}$ for all $i \in [p]$ and $a_{i', j_{B'}} < \min B'$ for some $i' \in [p]$.
Then $\max B < a_{i',j_B+1} < \min B'$.
Therefore, if for every non-spanning edge $B$ in $M$ we minimally extend the interval $I_{j_B}$ to contain the edge $B$, the intervals $I_1, \dots, I_k$ will always be disjoint.

Note that the matching $M_j$ given by the edges entirely within the interval $I_j$ is a simply-interlacing $k$-uniform ordered matching and hence is $k$-nestable by an inductive argument.
Therefore, the intervals $I_1,\dots,I_k$ form a $k$-nesting of $M$.
\end{proof}

We conclude by noting that Lemma~\ref{lem:iterated} will not apply to most ordered $k$-uniform matchings for $k \geq 3$.
For $k \geq 4$, let $A$ and $B$ be defined as
\[
	A = \{ 1, \dots, \lfloor k/2\rfloor \} \cup \{ k+1,\dots, k + \lceil k/2\rceil\}, \quad B = \{ \lfloor k/2\rfloor + 1, \dots, k\} \cup \{ k + \lceil k/2\rceil, \dots, 2k\}.
\]
Observe that the ordered matching with edges $A$ and $B$ is not $k$-nestable.
While every ordered $3$-uniform matching on two edges is $3$-nestable, there exists an ordered $3$-uniform matching that is not $3$-nestable.
A randomly-ordered matching contains these configurations with high probability, so the bound of Theorem~\ref{thm:nesting} does not apply to most ordered matchings.

\section{Future Directions}\label{future}

Our investigation into arbitrarily-ordered $k$-uniform matchings provides upper bounds that are similar to the previous bounds in the 2-uniform case.
Extending the techniques from 2-uniform matchings comes at the cost that it does not apply to all $k$-uniform ordered matchings, but they do provide bounds that are exponential and not a tower.
However, our methods do not allude to lower bounds, and hence it is unclear whether our upper bounds are tight. 

The largest question left open from our study of ordered Ramsey numbers is related to arbitrary orderings of $(k,\ell)$-paths. 
While we found upper bounds on $\OR{t}{P_e^{2,1}}$, our techniques did not easily extend to higher uniformities. 
Upper bounds on $\OR{t}{P_e^{k,\ell}}$ for arbitrary orderings of $P_e^{k,\ell}$ would be very interesting and would significantly extend our current techniques. Noticing that $\tow{k-2}{\Omega(n^2)}\leq\R{2}{K_n^k}\leq\tow{k-1}{O(n)}$ (see~\cite{EHR}), the bound for $\OR{t}{P_e^{k,k-1}}$ for the natural ordering cannot be far off a general bound for $\OR{t}{P_e^{k,k-1}}$ for an arbitrary ordering. However, $\OR{t}{P_e^{k,\ell}}$ for the natural ordering grows as a tower of height $i(k,\ell)-1$, so the upper bound for $\OR{t}{P_e^{k,\ell}}$ for an arbitrary ordering may be much larger, especially if $i(k,\ell)=2$. 
Thus, bounds on tight paths may not lead to bounds on loose paths in the same way that Theorem \ref{thm:main} draws this connection for monotone paths.

The generalized diamond $D_r$ consists of $r$ copies of $P_2^{2,1}$ who share first and last vertices. The ordering of the intermediate vertices is unimportant as all orderings yield isomorphic graphs. Balko, Cibulka, Kr\'{a}l, and Kyn\u{c}l~\cite{BK} determined that $\OR{2}{D_2}=11$. We would like to determine, asymptotically or otherwise, the growth of $\OR{t}{D_r}$ in terms of $r$. While the study of monotone paths explains what happens when a graph gets ``longer,'' the study of the generalized diamond will yield a better understanding of what happens when a graph gets ``wider.''

The natural extension of $D_r$ to higher uniformities, $D_r^{k,\ell}$, consists of $r$ copies of $P_2^{k,\ell}$ who share their first $k-\ell$ and last $k-\ell$ vertices. 
However, unless $\ell=1$, $D_r^{k,\ell}$ admits many nonisomorphic orderings of the intermediate vertices, none of which are essentially natural. 
Presumably, a somewhat symmetric ordering of the intermediate vertices will minimize $\OR{t}{D_r^{k,\ell}}$, but other than the fact that it is bounded below by $\OR{t}{P_2^{k,\ell}}$, it is unclear how large this number can become.

\section*{Acknowledgments}

The authors would like to thank David Conlon and Josef Cibulka for remarks that helped improve this paper.
In particular, Josef Cibulka presented the translation of colorings in the direct proof of Theorem~\ref{thm:main}.

\end{document}